\documentclass[11pt]{article}

\usepackage[english]{babel}
\usepackage[T1]{fontenc}
\usepackage[utf8]{inputenc}

\usepackage{times}
\usepackage{amsthm}
\usepackage{amsmath}
\usepackage{amssymb}
\usepackage{mathrsfs}
\usepackage{pb-diagram}
\usepackage{color}

\usepackage{hyperref}

\renewcommand\eqref[1]{(\ref{#1})}

\setlength{\textwidth}{16.2cm}
\setlength{\textheight}{22cm}
\setlength{\oddsidemargin}{0.2cm}
\setlength{\topmargin}{-1.4cm}

\def\A{{\mathcal A}}

\def\O{\mathcal O}

\def\th{\theta}
\def\Th{\Theta}
\def\N{\mathbb{N}}
\def\T{\mathbb{T}}
\def\Z{\mathbb{Z}}

\def\D{{\mathcal D}}

\def\H{\mathcal H}

\def\R{\mathbb{R}}

\def\S{\mathcal S}

\def\e{{\sf e}}
\def\g{{\mathfrak g}}
\def\m{{\sf m}}
\def\wm{\widehat{\sf m}}

\def\({\left(}
\def\[{\left[}
\def\){\right)}
\def\]{\right]}

\def\si{\sigma}
\def\Si{\Sigma}
\def\bu{\bullet}

\def\G{{\sf G}}
\def\wG{\widehat{\sf{G}}}
\def\p{\parallel}
\def\<{\langle}
\def\>{\rangle}

\def\fscr{\mathscr}

\newtheorem{Theorem}{Theorem}[section]
\newtheorem{Remark}[Theorem]{Remark}

\newtheorem{Corollary}[Theorem]{Corollary}
\newtheorem{Proposition}[Theorem]{Proposition}
\newtheorem{Definition}[Theorem]{Definition}
\newtheorem{Example}[Theorem]{Example}

\numberwithin{equation}{section}

\begin{document}

\title{Global and Concrete Quantizations on General Type I Groups}

\date{\today}

\author{M. M\u antoiu and M. Sandoval \footnote{
\textbf{2010 Mathematics Subject Classification: Primary 46L65, 47G30, Secondary 22D10, 22D25.}
\newline
\textbf{Key Words:}  locally compact group, exponential Lie group, noncommutative Plancherel theorem, pseudo-differential operator, coadjoint orbit.}
}
\date{\small}
\maketitle \vspace{-1cm}

\begin{abstract}
In recent papers and books, a global quantization has been developed for \textit{unimodular} groups of type I\,. It involves operator-valued symbols defined on the product between the group $\mathsf{G}$ and its unitary dual $\widehat{\mathsf{G}}$\,, composed of equivalence classes of irreducible representations. For compact or for graded Lie groups, this has already been developed into a powerful pseudo-differential calculus. In the present article we extend the formalism to arbitrary locally compact groups of type I\,, making use of the Fourier theory of non-unimodular second countable groups. The unitary dual and its Plancherel measure being quite abstract in general, we put into evidence situations in which concrete forms are available. Kirillov theory and parametrizations of large parts of $\widehat{\mathsf{G}}$ allow rewriting the basic formulae in a manageable form. Some examples of completely solvable groups are worked out.
\end{abstract}

\section{Introduction}\label{humpf}

Aiming at a pseudo-differential theory working globally for classes of
non-commutative Lie groups, M. Ruzhansky and collaborators, following an
undeveloped idea of M. Taylor~\cite[Sect.\,1.2]{Ta} (see also~\cite{Ze}),
introduced quantizations making a full use of the group structure and the
harmonic analysis concepts connected with its representation theory. Namely,
denoting by $\wG$ the space of classes of equivalence of unitary irreducible
representations of $\G$\,, to suitable families
$\big\{a (x, \xi)\mid x \in\G\,,\xi\in\wG\big\}$\,, where $a (x,\xi)$ is an operator in the Hilbert space $\H_\xi$ of the representation $\xi$\,, one associates operators ${\sf Op}(a)$\,, acting in various function spaces over $\G$\,. Using specific properties, first the cases of compact Lie groups~\cite{RT} and graded nilpotent Lie groups~\cite{FR} were treated and developed into a very detailed formalism. H\"ormander-type of symbol classes are available and this has far-reaching consequences, many traditional results valid for $\G=\R^n$ being adapted to theses cases. In the books~\cite{RT,FR} many other recent relevant references may be found, among which we cite a few:~\cite{BFKG,CGGP,FR1,Glo,Melin,RT2,RTW,RW}. One can also find there plenty of results and applications, as well as a description of the advantages of such a framework, that we  do not repeat here.

\smallskip
The core of the theory can be extended to much more general classes of topological groups. In~\cite{MR}, one treats {\it unimodular} second countable type I locally compact groups (clearly in such a general case not all the techniques from~\cite{RT,FR} can be adapted). See also~\cite{Ma1} for the corresponding Berezin-Toeplitz formalism. However, {\it non-unimodular} type I groups form a rich and important class. Many simple and natural semi-direct products are of this type. Even in low dimension, the non-unimodular Lie groups prevail. Many other examples arise in the study of parabolic subgroups of semisimple Lie groups, that are used to investigate irreducible representations using extensions of Mackey's machine.

\smallskip
The first main purpose of this article is to make the necessary adaptations to treat the non-unimodular case. Since Fourier theory is basic, the key is the non-unimodular version of the Plancherel theorem of Tatsuuma~\cite{Tat} and Duflo and Moore~\cite{DM}, as presented in book-form by H. F\"uhr in~\cite{Fu}.

\smallskip
So let $\G$ be a locally compact group with unit $\e$ and unitary dual $\widehat\G$\,. It will be assumed that our groups are second countable and of type I. The formula
\begin{equation}\label{eq:st}
\left[{\sf Op}(A)u \right](x) = \int_\G\!\int_{\widehat\G} \mathrm{Tr}_\xi\!\left(  A(x,\xi){\rm D}_\xi^{\frac 1 2} {\pi_\xi}(xy^{\scriptscriptstyle -1})^*\right)\!\Delta(y)^{- \frac 1 2} u(y)d\wm(\xi)d\m(y)
\end{equation}
is the starting point for a global pseudo-differential calculus on $\G$\,. It involves suitable operator-valued symbols $A$ defined on $\G\times \widehat\G$\,, the modular function $\Delta$ of the group and the formal dimension operators ${\rm D}_\xi$ introduced by Duflo and Moore~\cite{DM}. Formula (\ref{eq:st}) makes use of
the Haar $\m$ and Plancherel $\wm$ measures on $\G$ and $\widehat\G$ respectively. We also fixed a measurable field of irreducible representations $(\pi_\xi)_{\xi \in \widehat\G}$ such that $\pi_\xi$ belongs to the class $\xi$ and $\pi_\xi$ acts on a Hilbert space $\mathcal{H}_\xi$\,. The trace ${\rm Tr}_\xi$ refers to this Hilbert space.

\smallskip 
Formula (\ref{eq:st}) is a generalization of
the relation derived in~\cite[(1.1)]{MR} for unimodular groups, with a difference on the order of the factors that has to do with the choice of a convention for the Fourier transform. Thus our quantization will cover right invariant operators whereas the one in~\cite{MR} gives rise to left invariant operators. 

\smallskip
One of the advantages of using operator valued symbols is that one gets a global approach and a full symbol, free of localization choices, everything relying on harmonic analysis concepts attached to the group. Even for compact Lie groups there is no notion of full scalar-valued symbols for a pseudo-differential operator using local coordinates. For a more detailed discussion, for motivations and a full development of particular cases see~\cite{FR,RT,MR}. All these contain historical background and references to the existing literature treating pseudo-differential operators and quantization in a group theoretic context.

\smallskip
The first part of the article presents the quantization in the non-unimodular case. After reviewing the general form of Plancherel's Theorem, in Section 3 the basic constructions are indicated. A natural Wigner transformation appears as a dequantization, i.e. as an inverse of the ${\sf Op}$ procedure. The integral kernel of ${\sf Op}(A)$ is computed in terms of the symbol $A$\,. There are also some brief remarks upon the connections with the Schr\"odinger representation of suitable crossed product $C^*$-algebras and with families of (unbounded) Weyl operators, as well as on the ${\sf Op}$-calculus for product groups and for the Abelian case. 

\smallskip
In Section 4 we state and prove a covariance property, showing how conjugation with group translations is reflected at the level of the symbols. 

\smallskip
Until now, the formalism works mainly at the level of (suitable) $L^2$-spaces, as required by Plancherel's Theorem. Section 5 is dedicated to restrictions and extensions. The treatment of~\cite[Sect.\,5]{MR}, involving ``smooth'' compactly supported functions and corresponding distributions is briefly reviewed, since it does not depend on unimodularity. (For groups without an explicit Lie structure Bruhat's space $\mathcal D(\G)$ is useful.) Of course one is also interested in Schwartz-type spaces and their duals, formed of tempered distributions. Besides the well-known constructions for nilpotent groups (that are all unimodular), there is a less known approach valid for solvable group~\cite{DG}, many of them being non-unimodular. We show that such spaces may be used in the setting of the global quantization, for extension or restriction purposes, extending known results from the case $\R^n$.

\smallskip
One could complain that the unitary dual and its Plancherel measure and transform are complicated and abstract objects. Thus we arrive at the second main goal of this article. We put into evidence some situations when more explicit and manageable versions are available and adapt the global quantization to this concrete setting.

\smallskip
The main idea in Section 6 is to use Kirillov theory, having as a starting point the Kirillov map $\kappa:\g^\sharp/_{\!\G}\to\wG$\,, trying to involve the space $\g^\sharp/_{\!\G}$ of all coadjoint orbits in the global pseudo-differential formalism. This works well especially when the two spaces $\wG$ and $\g^\sharp/_\G$ are not too different. In particular, when $\G$ is an exponential group, Kirillov's map is a homeomorphism (hence a Borel isomorphism)~\cite{LL} and it is possible to replace $(\wG,\wm)$ with an isomorphic copy $(\g^\sharp/_{\!\G},\nu)$ in the constructions of the preceding section. The paper~\cite{DR} supplies information about the measure $\nu$. Thus one gets {\it an orbital form of the quantization}, having its covariance properties. This may also be adapted to more general situations, being enough to have a good Kirillov correspondence on controlled subsets with negligible complements. But the fact is that the orbit space and its measure theory are still intricate and non-explicit if no extra information is used.

\smallskip
Fortunately, there are cases in which the Plancherel transform has been worked
out in concrete terms. We shall review some of such situations in Section 7 and
put our quantization in this perspective. One is mainly interested in
stratifying the dual vector space $\g^\sharp$\,, putting into evidence ``the main
layer'' $\Omega\subset\g^\sharp$ as well as a suitable cross section $\Si$\,, and using as much
Euclidean theory as one can. Ideally, $\Si$ should be a smooth manifold,
explicitly modeled over an open subset of a vector space, also hoping to convert
the Plancherel measure to one equivalent to the Lebesgue measure and to compute
its density. Then {\it a concrete form of the global quantization} would be
available. Nilpotent groups are quite easy to deal with~\cite{CG}, but they are
unimodular and we would like to be able to cover more general situations. We
rely on results of Duflo-Ra\"{\i}s~\cite{DR} and Currey~\cite{Cu,Cu1,Cu2,CP} for completely solvable groups and indicate briefly the more complicated situation of exponential groups.

\smallskip
Currey's parametrizations, valid for such large classes of groups, are remarkable, actually explicit, but quite involved. But in particular cases it might be easier to get manageable parametrizations using directly Mackey's machine of induced representations. This has been done in~\cite{AV} for the $3$-dimensional connected simply connected solvable Lie groups in the Bianchi classification. In Section 8 we review their results and put them in the perspective of the concrete global pseudo-differential calculus for such groups.

\smallskip
A parameter $\tau$ (actually a measurable map $\tau:\G\to\G$) can be used in formulas to govern ordering issues, as a number  belonging to $[0,1]$ was traditionally  involved~\cite{Sh} for $\tau$-quantizations in $\R^n$. It has already been introduced in~\cite[Sect.\,4]{MR} for non-unimodular groups, its role has been analyzed and it has been shown when a symmetric Weyl-type quantization (corresponding to $\tau=1/2$ for $\R^n$) is available. We also make use of this parameter in Section 3, to convince the reader that it can be included even for non-unimodular groups. But we send to~\cite{MR} for explanations and results (especially concerning the adjoint operation at the level of symbols and the ordering of multiplications and convolution operators in this calculus). And in the second half of the article, for simplicity, we stick to the Kohn-Nirenberg-type quantization, corresponding to $\tau(\cdot)=\e$\,.

\smallskip
The basic facts and constructions of Section 3 and an analysis of the ``$ax+b$'' group appeared as an announcement in~\cite{MS}.

\smallskip
Let us make some final comments. Although the idea to involve the irreducible representation theory of a (suitable) group in the construction of a global pseudo-differential calculus appeared in the eighties, a large part of the constructions and results of the present paper are formally new. This is especially true for the concrete versions, appearing in Sections~\ref{intkid},~\ref{csoltkid} and~\ref{cracit}. Up to our knowledge, the basic formulae~\eqref{opp},~\eqref{oppad},~\eqref{oppadd},~\eqref{operatoruku},~\eqref{operatoraku}, all needing consistent explanations, did nor appear explicitly in the literature.  As soon as one states clearly the main aims of the formalism, the development follows rather straightforwardly. In particular, the proofs are few and follow quite obvious lines. Therefore, the present work could mainly be considered an expository article. But the Harmonic Analysis tools staying behind the constructions are sophisticated and deep, and putting them to work rigorously in the quantization setting requires a non-trivial effort. This is particularly the case for non-unimodular groups, the main topic of the paper. Collecting together and adapting the constructions needed for the parametrized versions in the exponential (and especially in the completely solvable case) is also a consistent work, not easily accessible for an expert in pseudo-differential operators. The general types of groups we consider do not allow a more detailed unified analytical development of the pseudo-differential calculus. In the future, having the general theory exposed here as a starting point, we intend to study deeper aspects and applications for interesting subclasses of groups.

\section{The unitary dual and the Plancherel transform}\label{bogart}

We start with a couple of notations. For a (complex, separable) Hilbert space $\H$\,, one denotes by $\mathbb B(\H)$ the $C^*$-algebra of all linear bounded operators in $\H$ and by $\mathbb K(\H)$ the closed two-sided $^*$-ideal of all the compact operators. The Schatten-von Neumann class  $\mathbb B^p(\H)$ of order $p\in[1,\infty)$ is a Banach space. Hilbert-Schmidt operators form a two-sided $^*$-ideal $\mathbb B^2(\H)$ and a Hilbert space with the scalar product $\<A,B\>_{\mathbb B_2(\H)}\!:={\rm Tr}\!\(AB^*\)$\,. 

\smallskip
Let $\G$ be a locally compact group with unit $\e$ and fixed left Haar measure $\m$\,. Being mainly interested in non-unimodular cases,  we recall the role played by {\it the modular function} for substitution of variables
\begin{equation}\label{eq:change of variables}
  \int_\G f(y)\,d\m(y) = \int_\G \Delta(x) f(yx)\,d\m(y) = \int_\G\Delta(y)^{-1} f(y^{-1})\,d\m(y)\,.
\end{equation}
There is a Banach $^*$-algebra structure on $L^1(\G)$: the convolution of two functions is defined by 
\begin{equation*}\label{convol}
(f \star g)(x) = \int_\G f(y) g(y^{-1}x)\,d\m(y)\,,
\end{equation*}
and the involution is given by
\begin{equation*}\label{acea}
f^{\star_1}(x) = \Delta(x)^{- 1} \overline{f (x^{-1})}\,.
\end{equation*}
In general, one has a $p$-dependent isometric involution on $L^p(\G)\equiv L^p(\G;\m)$ given by
\begin{equation*}\label{eq:involution}
f^{\star_p}(x) = \Delta(x)^{-\frac 1 p} \overline{f (x^{ -1})}\,.
\end{equation*}

\begin{Remark}\label{aceea}
{\rm The correspondence $L^2(\G)\ni f\to f^{\star_2}\in L^2(\G)^\dag$ is a linear unitary map. In the following we reserve the notation $f^\star\equiv f^{\star_2}$ for functions in the Hilbert space $L^2(\G)$\,. }
\end{Remark}

Let now $\G$ be a type I second countable locally compact group. We set $\,\wG:={\rm Irrep(\G)}/_{\cong}$ (cf.~\cite{Di,Fo1}) for all the classes of irreducible strongly continuous unitary representations of $\G$ and call it {\it the unitary dual of $\G$}\,. Both ${\rm Irrep(\G)}$ and $\,\wG$ are endowed with (standard) Borel structures~\cite[Sect.\,18.5]{Di}. The structure on $\wG$ is the quotient of that on ${\rm Irrep(\G)}$ and is called {\it the Mackey Borel structure}. There is a measure on $\wG$\,, called {\it the Plancherel measure associated to $\m$} and denoted by $\wm$~\cite[Sect.\,18.8]{Di}.

\smallskip
For non-unimodular groups an important role is played by {\it the Duflo-Moore operators} (also called {\it formal dimension operators}). They are densely defined positive operators with dense image ${\rm D}_\pi: \mathrm{Dom}({\rm D}_\pi) \to \mathcal H_\pi$ and satisfy almost everywhere {\it the semi-invariance condition }
\begin{equation}\label{eq:invrel}
\pi(x){\rm D}_\pi \pi(x)^* = \Delta(x)^{-1}{\rm D}_\pi, \quad\forall\,x \in\G\,.
\end{equation}

Let $\wm$ be a Plancherel measure  in $\widehat\G$\,, $(\pi_\xi)_{\xi \in \widehat\G}$ a measurable field of representations  and ${\rm D}_{\pi_\xi}\!\equiv{\rm D}_\xi : \mathcal H_\xi \to \mathcal H_\xi$ a family of densely defined positive operators  satisfying (\ref{eq:invrel}) for $\wm$-a. e.\@ $\xi \in \widehat\G$\,. We define (in the weak sense) the \textit{operator-valued Fourier transform} of a function $w\in L^1(\G)$ as the map
$$
\wG\ni\xi\mapsto\pi_\xi(w) := \int_\G w(y){\pi_\xi}(y) \,d\m(y)\in\mathbb B(\H_\xi)
$$
and the \textit{Plancherel transform} of $w \in L^1(\G)\cap L^2(\G)$ as the operator
$$
(\mathscr Pw) (\xi)\equiv\widehat w(\xi) = \pi_\xi (w) {\rm D}_\xi ^{1/2}.
$$

In such terms, the main result, that we are going to use repeatedly, is the following form~\cite{DM,Fu,Tat} of the non-commutative Plancherel theorem (the unimodular case can be found in~\cite{Di}):

\begin{Theorem}\label{sec:four-planch-transf}
Let $\,\G$ be a type I second countable locally compact group. There exists a $\sigma$-finite Plancherel measure $\wm$ on $\widehat\G$\,, a measurable field of irreducible representations $(\pi_\xi)_{\xi \in \widehat\G}$ with $\pi_\xi \in \xi$\,, a measurable field $({\rm D}_\xi)_{\xi \in \widehat\G}$ of densely defined positive operators on $\mathcal{H}_\xi$ with dense image, satisfying (\ref{eq:invrel}) for $\wm$-almost every $\xi \in \widehat\G$\,, which have the following properties:
\begin{enumerate}
\item Let $w \in L^1(\G) \cap L^2(\G)$\,. For $\wm$-almost all $\xi \in \widehat\G$\,, the operator $\widehat w(\xi)$ extends to a Hilbert-Schmidt operator on $ \mathcal H_\xi$ and
\begin{equation*}\label{eq:V_u,v}
\left\Vert\,w\,\right\Vert ^2_2 = \int_{\widehat\G}\,\left\Vert\,\widehat w(\xi)\,\right\Vert_{\mathbb B_2}^2\!d\wm(\xi)\,.
\end{equation*}
\item The Plancherel transformation extends in a unique way to a unitary operator
\begin{equation*}
\mathscr P: L^2(\G) \to \int^{\oplus}_{\widehat\G}\!\mathbb B _2(\mathcal H _\xi)\, d\wm(\xi)\,.
\end{equation*}
\item The Plancherel measure and the operator field satisfy the inversion formula
\begin{equation*}\label{eq:plancherel inversion formula}
w(x) = \int_{\widehat\G} {\rm Tr}_\xi\!\left(\widehat w(\xi){\rm D}_\xi^{\frac 1 2}{\pi_\xi}(x)^*\right)d\m(\xi)\,,
\end{equation*}
for all $w$ in the Fourier algebra of $\,\G$ (in particular, for all $w\in\mathcal D(\G)$)\,; see Section~\ref{ferfelic}. The integral converges absolutely in the sense that $\widehat w(\xi) {\rm D}_\xi^{\frac 1 2}$
extends to a trace-class operator $\wm$-a.e.~and the integral of the trace-class norms is finite.
\end{enumerate}
\end{Theorem}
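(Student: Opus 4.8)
\emph{The plan} is to produce all the objects $(\wm,(\pi_\xi)_\xi,({\rm D}_\xi)_\xi)$ at once from the structure theory of the von Neumann algebra generated by the left regular representation, and then to read off the three assertions from the resulting direct integral decomposition. Concretely, let $\lambda$ denote the left regular representation of $\G$ on $L^2(\G)$ and let $\mathcal M:=\lambda(\G)''$ be the associated (left) group von Neumann algebra, with commutant the right group von Neumann algebra. Because $\G$ is second countable, $\mathcal M$ has separable predual, and because $\G$ is of type I, $\mathcal M$ is a type I von Neumann algebra. The first step is therefore to invoke the central decomposition theory of Dixmier \cite{Di}: the spectrum of the centre of $\mathcal M$, equipped with its Mackey Borel structure, is measurably identified with $\wG$, and $\mathcal M$ disintegrates as a direct integral of type I factors $\int^{\oplus}_{\wG}\mathbb B(\H_\xi)\,d\wm(\xi)$ against a $\sigma$-finite measure, which we take as the definition of the Plancherel measure $\wm$. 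Along the way one selects, measurably in $\xi$, a representative $\pi_\xi\in\xi$ realizing each fibre, producing the measurable field $(\pi_\xi)_\xi$.

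Second, I would transport this disintegration to the Hilbert space level, realizing $\mathcal M$ in standard form on $L^2(\G)$. Each type I factor fibre $\mathbb B(\H_\xi)$ acts standardly on the Hilbert--Schmidt space $\mathbb B_2(\H_\xi)\cong\H_\xi\otimes\overline{\H_\xi}$, and the regular representation decomposes as $\int^{\oplus}_{\wG}\pi_\xi\otimes\mathbf 1\,d\wm(\xi)$. This yields a unitary $L^2(\G)\to\int^{\oplus}_{\wG}\mathbb B_2(\H_\xi)\,d\wm(\xi)$, which is the content of part (2); one then has to check that on the dense subspace $L^1(\G)\cap L^2(\G)$ this unitary is computed by the Fourier formula, namely that its value at $\xi$ agrees with $\pi_\xi(w){\rm D}_\xi^{1/2}$ (the compatibility $\pi_\xi(\lambda(x)w)=\pi_\xi(x)\pi_\xi(w)$ shows that $\widehat{\cdot}$ does intertwine $\lambda$ with left multiplication by $\pi_\xi(x)$, matching the fibrewise $\pi_\xi\otimes\mathbf 1$). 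Granting this, part (1) is just Parseval for $\mathscr P$ restricted to $L^1(\G)\cap L^2(\G)$, and the inversion formula of part (3) is the adjoint statement, with trace-class convergence obtained by factoring $\widehat w(\xi){\rm D}_\xi^{1/2}$ as a product of two Hilbert--Schmidt operators and applying Cauchy--Schwarz, the hypothesis that $w$ lie in the Fourier algebra guaranteeing finiteness of the integral of trace-class norms.

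The genuinely non-unimodular ingredient, and the step I expect to be the main obstacle, is the construction of the Duflo--Moore field $({\rm D}_\xi)_\xi$ together with its semi-invariance \eqref{eq:invrel} and the precise normalization that turns $\mathscr P$ into an \emph{isometry}. In the unimodular case ${\rm D}_\xi$ is a scalar, the formal dimension, and $\pi_\xi(w)$ is already Hilbert--Schmidt; in the non-unimodular case the natural weight on $\mathcal M$ is not a trace, and I would produce ${\rm D}_\xi$ as the fibrewise component of a square root of the modular operator of the Plancherel weight in the sense of Tomita--Takesaki. The covariance of that modular operator under $\lambda$ reproduces exactly the relation $\pi(x){\rm D}_\pi\pi(x)^*=\Delta(x)^{-1}{\rm D}_\pi$ of \eqref{eq:invrel}, while its positivity and dense range are inherited from the faithful normal semifinite weight. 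The delicate points are the measurability of $\xi\mapsto{\rm D}_\xi$ as a field of \emph{unbounded} operators and the essential uniqueness of the pair $(\wm,({\rm D}_\xi)_\xi)$ up to the rescaling $\wm\mapsto c\,\wm$, ${\rm D}_\xi\mapsto c^{-1}{\rm D}_\xi$; these are precisely the theorems of Duflo--Moore and Tatsuuma. In practice I would therefore assemble the statement by citing \cite{DM,Tat,Fu} for this core, supplying only the routine verification that the Fourier-transform normalization used above matches the convention underlying \eqref{eq:st}.
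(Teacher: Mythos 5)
The paper offers no proof of this theorem: it is imported directly from Tatsuuma, Duflo--Moore and F\"uhr \cite{Tat,DM,Fu}, which is exactly where your proposal also ends up. Your sketch of what those references actually do --- central decomposition of the group von Neumann algebra over $\wG$ against a $\sigma$-finite measure, identification of the fibrewise standard form with $\mathbb B_2(\H_\xi)$ so that part (2) becomes the disintegration of the regular representation, and extraction of the unbounded Duflo--Moore field from the non-tracial (modular) structure of the Plancherel weight, with the semi-invariance \eqref{eq:invrel} reflecting the behaviour of that weight under right translations and part (3) following by factoring $\widehat w(\xi){\rm D}_\xi^{1/2}$ into two Hilbert--Schmidt operators for $w$ in the Fourier algebra --- is a faithful outline of their argument, so there is nothing to fault.
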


We make use of the following notations:
\begin{equation*}\label{puicica}
\mathscr B^2(\wG)= \int^{\oplus}_{\widehat\G} \mathbb B_2(\mathcal H_\xi) \,d\wm(\xi)\,, \quad\mathscr{B}^1(\wG) = \int^{\oplus}_{\widehat\G} \mathbb B^1(\mathcal H_\xi) {\rm D}_\xi^{- \frac 1 2}\, d\wm(\xi)\,,
\end{equation*}
\begin{equation}\label{puicika}
\mathscr{B}^2(\G\times\wG)= L^2(\G) \otimes \mathscr B^2(\wG)\,, \quad\mathscr{B}^2(\widehat\G\times\G)= \mathscr B^2(\wG)\otimes L^2(\G)\,.
\end{equation}
$\mathscr{B}^2(\G\times\wG)$\,, one of our natural spaces of symbols, has the inner product 
$$
\langle A,B \rangle_{\mathscr{B}^2} = \int_\G \int_{\widehat\G}\mathrm{Tr}_\xi \left[A(x,\xi)B(x,\xi)^* \right]d\wm(\xi)\,d\m(x)\,.
$$

\section{The basic definitions: the global quantization and the Wigner transform}\label{aici}

We introduce pseudo-differential operators through $\tau$-quantizations for an arbitrary measurable function $\tau:\G\to\G$\,. For this we fix a Plancherel measure $\m$ and a measurable field $(\pi_\xi,{\rm D}_\xi)_{\xi\in \widehat\G}$ as in the non-commutative Plancherel Theorem. It is easy to see that different choices give rise to equivalent constructions.

\smallskip
We recall that to elements $u,v$ of $L^2(\G)$ one associates the rank one operator
\begin{equation}\label{rankone}
\Upsilon_{u,v}:L^2(\G)\to L^2(\G)\,,\quad \Upsilon_{u,v}(w):=\<w,\overline v\>_{L^2(\G)}\,u\,.
\end{equation}
This induces an isomorphism $\Upsilon:L^2(\G)\otimes L^2(\G)\to\mathbb B^2\big[L^2(\G)\big]$\,, associating integral operators to kernels
\begin{equation}\label{intket}
[\Upsilon(K)u](x):=\int_\G K(x,y)u(y)d\m(y)\,,
\end{equation}
such that $\Upsilon(u\otimes v)=\Upsilon_{u,v}$\,.

\smallskip
The remaining part of the construction, relying on the fact that $\G$ is a second countable type I group, is conveniently summarised in the following commutative diagram:
\begin{equation}\label{diagrama}
\begin{diagram}
\node{L^2(\G)\otimes L^2(\G)} \arrow{e,t}{{\sf id}\otimes{\fscr P}} \arrow{s,l}{{\sf C}^\tau}\arrow{se,r}{{\sf Sch}^\tau}\node{L^2(\G)\otimes\mathscr B^2(\wG)}\arrow{s,l}{{\sf Op}^\tau}\arrow{e,t}{\mathscr P\otimes\mathscr P^{-1}}\node{\mathscr B^2(\wG)\otimes L^2(\G)} \\ 
\node{L^2(\G)\otimes L^2(\G)} \arrow{e,b}{\Upsilon} \node{\mathbb B^2\big[L^2(\G)\big]}\arrow{n,r}{{\sf Wig}^\tau}\arrow{ne,r}{{\sf FWig}^\tau}
\end{diagram}
\end{equation}
which will be seen to be composed of isomorphisms. This requires a definition and some explanations.

\begin{Definition}\label{toate}
\begin{enumerate}
\item[(a)]
The ${\sf Op}^\tau\!$-arrow  is called {\rm the global $\tau$-pseudo-differential calculus} or {\rm the global $\tau$-quantization}, it points downwards and it is the inverse of the ${\sf Wig}^\tau\!$-arrow ({\rm the $\tau$-Wigner transformation}) pointing upwards. Thus ${\sf Wig}^\tau\!=\big({\sf Op}^\tau\big)^{-1}\!=\big({\sf Op}^\tau\big)^*\!$ will be seen as a dequantization procedure (a symbol map).
\item[(b)]
We call ${\sf FWig}^\tau:=\big(\mathscr P\otimes\mathscr P^{-1}\big)\!\circ{\sf Wig}^\tau$ {\rm the $\tau$-Fourier-Wigner transformation}.
\item[(c)]
We call ${\sf Sch}^\tau\!=\Upsilon\circ{\sf C}^\tau$ {\rm the $\tau$-Schr\"odinger representation}.
\end{enumerate}
\end{Definition}

We now introduce the ``change of variable'' transformation ${\sf C}^\tau$. Given a square integrable function $K:\G\times\G\to\mathbb C$, we set
\begin{equation} \label{lipsa}
\big[{\sf C}^\tau\!(K)\big](x,y):=\Delta(y)^{-\frac 1 2}\,K\big(\tau(yx^{-1})x,xy^{-1}\big)\,.
\end{equation}

\begin{Proposition}\label{ofi}
The transformations ${\sf C}^\tau,{\sf Sch}^\tau,{\sf Op}^\tau,{\sf Wig}^\tau,{\sf FWig}^\tau$ are unitary.
\end{Proposition}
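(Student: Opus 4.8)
The plan is to reduce the entire statement to one genuinely computational fact, namely that the change of variables ${\sf C}^\tau$ of \eqref{lipsa} is unitary, and then to read off the unitarity of the four remaining maps from the commutative diagram \eqref{diagrama} by composing with transformations that are unitary for independent reasons. Those independent reasons are: $\Upsilon$ is the classical identification of $L^2(\G)\otimes L^2(\G)\cong L^2(\G\times\G)$ with the Hilbert--Schmidt ideal $\mathbb B^2\big[L^2(\G)\big]$, unitary because $\|\Upsilon_{u,v}\|_{\mathbb B^2}=\|u\|_2\,\|v\|_2$, so that the prescription \eqref{rankone} extends to an isometry onto $\mathbb B^2\big[L^2(\G)\big]$; and ${\sf id}\otimes\mathscr P$ together with $\mathscr P\otimes\mathscr P^{-1}$ are unitary because $\mathscr P$ is unitary by Theorem \ref{sec:four-planch-transf}(2) and tensor products of unitaries are unitary.

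Granting that ${\sf C}^\tau$ is unitary, the remaining claims follow in cascade. By Definition \ref{toate}(c), ${\sf Sch}^\tau=\Upsilon\circ{\sf C}^\tau$ is a composition of unitaries, hence unitary. Commutativity of the triangle through the top-middle node in \eqref{diagrama} gives ${\sf Op}^\tau\circ({\sf id}\otimes\mathscr P)={\sf Sch}^\tau$, whence ${\sf Op}^\tau={\sf Sch}^\tau\circ({\sf id}\otimes\mathscr P)^{-1}$ is unitary; then ${\sf Wig}^\tau=({\sf Op}^\tau)^{-1}$ is unitary by Definition \ref{toate}(a), and finally ${\sf FWig}^\tau=(\mathscr P\otimes\mathscr P^{-1})\circ{\sf Wig}^\tau$ is unitary by Definition \ref{toate}(b).

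The core is thus to show that ${\sf C}^\tau$ is unitary on $L^2(\G\times\G)$. I would first note that the substitution in the argument of $K$ decouples: setting $t:=xy^{-1}$ one has $yx^{-1}=t^{-1}$, so the first slot of $K$ becomes $\tau(t^{-1})x$ and the second slot becomes $t$, that is $\big[{\sf C}^\tau(K)\big](x,y)=\Delta(y)^{-\frac 1 2}K\big(\tau(t^{-1})x,t\big)$. To compute $\|{\sf C}^\tau(K)\|_2^2$ I would integrate in $y$ first, at fixed $x$, with the substitution $y=t^{-1}x$. This map is inversion $t\mapsto t^{-1}$, which sends $\m$ to the right Haar measure $\Delta^{-1}\m$, followed by right translation by $x$, which preserves it; concretely $d\m(y)=\Delta(t^{-1}x)\,d\m(t)=\Delta(t)^{-1}\Delta(x)\,d\m(t)$. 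Since the prefactor is $\Delta(y)^{-1}=\Delta(t^{-1}x)^{-1}$, it cancels the Jacobian exactly, and the inner integral collapses to $\int_\G\big|K(\tau(t^{-1})x,t)\big|^2\,d\m(t)$. After Fubini, integrating in $x$ at fixed $t$ and using left invariance of $\m$ to absorb the left translation $x\mapsto\tau(t^{-1})x$, one obtains $\|{\sf C}^\tau(K)\|_2^2=\int_\G\!\int_\G|K(s,t)|^2\,d\m(s)\,d\m(t)=\|K\|_2^2$. Unitarity then follows from invertibility: the assignment $(x,y)\mapsto(s,t)$ with $t=xy^{-1}$ and $s=\tau(t^{-1})x$ is a measurable bijection with measurable inverse $x=\tau(t^{-1})^{-1}s$, $y=t^{-1}\tau(t^{-1})^{-1}s$, which produces an explicit two-sided inverse of ${\sf C}^\tau$; an isometry that is onto is unitary.

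The main obstacle, and the only point where non-unimodularity genuinely intervenes, is the bookkeeping of modular factors in this change of variables. It is precisely the weight $\Delta(y)^{-\frac 1 2}$ built into \eqref{lipsa} that is engineered so that its square matches the Jacobian $\Delta(t^{-1}x)$ produced by the inversion $y\mapsto t^{-1}x$, making the two contributions cancel and forcing ${\sf C}^\tau$ to be isometric. I would be careful to carry out the one-variable-at-a-time substitutions using only the invariance relations \eqref{eq:change of variables}, and to observe that the mere measurability of $\tau$ is harmless: for each fixed $t$ the element $\tau(t^{-1})$ is a constant, so it enters only through a genuine left translation in the variable $x$, under which $\m$ is invariant, while joint measurability guarantees the applicability of Fubini.
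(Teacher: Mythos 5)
Your proposal is correct and follows essentially the same route as the paper: reduce everything to the unitarity of ${\sf C}^\tau$ (the other four maps then being compositions of unitaries read off the diagram \eqref{diagrama}), prove isometry via Fubini, left invariance and the modular relations \eqref{eq:change of variables}, and obtain surjectivity from the explicit inverse, which is exactly \eqref{pedos}. The only difference is that you carry out in detail the change-of-variables computation that the paper leaves to the reader, and your bookkeeping of the factor $\Delta(y)^{-1}$ against the Jacobian $\Delta(t^{-1}x)$ is accurate.
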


\begin{proof}
Recall that $\Upsilon$ and ${\fscr P}$ are unitary. We show that ${\sf C}^\tau: L^2(\G)\to L^2(\G)\otimes L^2(\G)$ is also unitary. Then this will define the unitary maps ${\sf Sch}^\tau,{\sf Op}^\tau,{\sf Wig}^\tau,{\sf FWig}^\tau$ making the diagram commute.

\smallskip
The fact that ${\sf C}^\tau$ is well-defined and isometric follows easily from Fubini's theorem, the left invariance of the Haar measure and formula~\eqref{eq:change of variables}. Then one also checks by direct computations that the adjoint of ${\sf C}^\tau$ coincides with its inverse and is given by
\begin{equation}\label{pedos}
\big[({\sf C}^\tau)^{-1}(L)\big](x,y) = \Delta\big(y^{-1}\tau(y^{-1})^{-1}x\big)^{ \frac 1 2}L\big(\tau(y^{-1})^{-1}x,y^{-1}\tau(y^{-1})^{-1}x\big)\,.
\end{equation}
\end{proof}

\begin{Remark}\label{frightening}
{\rm The formulae~\eqref{lipsa} and~\eqref{pedos} look quite intricate. But let us suppose that $\G$ is Abelian, in additive notation, and that $\tau:\G\to\G$ commutes with inversion (being a homomorphism, for instance). Then the two formulae read
\begin{equation} \label{kerneltau}
[{\sf C}^\tau\!(K)](x,y) = K(x+\tau(y-x),x-y)\,,\quad \big[({\sf C}^\tau)^{-1}(L)\big](x,y) = L(x+\tau(y),x-y+\tau(y))\,.
\end{equation}
If $\G=\R^n$ is a vector space, multiplication with a number $\tau\in[0,1]$ fits in our scheme and the formulae are familiar from the theory of $\tau$-quantizations~\cite{Sh}, important particular cases being $\tau=0$ (Kohn-Nirenberg), $\tau=1/2$ (Weyl) and $\tau=1$ (right quantization). In our framework too, the mapping $\tau$ is related to ordering issues. We refer to~\cite[Sect. 4 and 6]{MR} for unimodular groups and to~\cite[Sect. 6]{MS} for discussions about how the formalism covers multiplication and convolution operators,  the way these are ordered and Weyl-type requirements on $\tau$ to have a simple condition on the symbol $A$ yielding a self-adjoint operator ${\sf Op}^\tau\!(A)$\,. The relation between different $\tau$-quantizations is similar to the unimodular case~\cite[Sect 3]{MR}.
}
\end{Remark}

\begin{Remark}\label{simpleone}
{\rm In the important simple case $\tau(x)=\e\,$ for every $x\in\G$ we will skip the upper index, writting ${\sf C},{\sf Sch},{\sf Op},{\sf Wig},{\sf FWig}$\,. One could call ${\sf Op}$ {\it the Kohn-Nirenberg global quantization}.
}
\end{Remark}

Making use of the diagram~\eqref{diagrama} and of the notation~\eqref{puicika}, we have obtained unitary transformations
\begin{equation}\label{finop}
{\sf Op}^\tau\!:=\Upsilon\circ{\sf C}^\tau\!\circ({\sf id}\otimes\mathscr P)^{-1}\!:\mathscr B^2(\G\times\wG)\to\mathbb B^2\big[L^2(\G)\big]
\end{equation} 
(global $\tau$-quantization) and its inverse (dequantization)
\begin{equation*}\label{finwig}
{\sf Wig}^\tau\!:=({\sf id}\otimes\mathscr P)\circ({\sf C}^\tau)^{-1}\!\circ\Upsilon^{-1}:\mathbb B^2\big[L^2(\G)\big]\to\mathscr B^2(\G\times\wG)\,.
\end{equation*} 
${\sf Op}^\tau\!(A)$ is then called the \emph{$\tau$-pseudo-differential operator}\index{pseudo-differential operator} with symbol $A$\,. Unitarity implies
\begin{equation}\label{suntunitar}
\big\<{\sf Op}^\tau\!(A),T\big\>_{\mathbb B^2[L^2(\G)]}=\big\<A,{\sf Wig}^\tau\!(T)\big\>_{\mathscr B^2(\G\times\wG)}\,,\quad\forall\,A\in\mathscr B^2(\G\times\wG)\,,\,T\in\mathbb B^2[L^2(\G)]\,.
\end{equation}
One computes easily
\begin{equation}\label{opp}
\big[{\sf Op}^\tau\!(A)u\big](x)= \int_\G \int_{\widehat\G} \mathrm{Tr}_\xi\!\left(A(\tau(yx^{ -1})x,\xi){\rm D}_\xi^{\frac 1 2}{\pi_\xi}(yx^{-1}) \right)\!\Delta(y)^{-\frac 1 2}u(y) \,d\wm(\xi)\,d\m(y)\,.
\end{equation}
Thus, the integral kernel of ${\sf Op}^\tau\!(A)$ is the square integrable function $\ker_A^\tau={\sf C}^\tau\big[({\sf id}\otimes\mathscr P)^{-1}A\big]$ given by
\begin{align*}\label{eq:ker}
\ker_A^\tau(x,y)&= \Delta(y)^{-\frac 1 2}\!\int_{\widehat\G}{\rm Tr}_\xi\!\left(A(\tau(yx^{ -1})x,\xi){\rm D}_\xi^{\frac 1 2}{\pi_\xi}(xy^{-1})^*\right)d\wm(\xi)\,. 
\end{align*}
Writing ${\sf Wig}^\tau_{u,v}:={\sf Wig}^\tau\!(\Upsilon_{u,v})$\,, we get the quantization of rank one operators~\eqref{rankone}
\begin{equation*}\label{oanrenc}
\Upsilon_{u,v}={\sf Op}^\tau\!\big({\sf Wig}^\tau_{u,v}\big)\,,
\end{equation*}
where
\begin{equation}\label{wigg}
{\sf Wig}^\tau_{u,v}(x,\xi)=\int_\G\,\Delta\big(y^{-1}\tau(y^{-1})^{-1}x\big)^{ \frac 1 2}u\big(\tau(y^{-1})^{-1}x)v(y^{-1}\tau(y^{-1})^{-1}x\big){\rm D}_\xi^{1/2}\pi_\xi(y)^*d\m(y)\,.
\end{equation}
A different but equivalent realization is achieved via the Fourier-Wigner transformation, taking on elementary vectors $u\otimes v$ the explicit form (note that the role of the variables $x,y$ is different from~\eqref{wigg})
\begin{equation*}\label{frigg}
{\sf FWig}^\tau_{u,v}(\xi,y)=\int_\G\,\Delta\big(y^{-1}\tau(y^{-1})^{-1}x\big)^{ \frac 1 2}u\big(\tau(y^{-1})^{-1}x)v(y^{-1}\tau(y^{-1})^{-1}x\big){\rm D}_\xi^{1/2}\pi_\xi(x)^*d\m(x)\,.
\end{equation*}
Setting $\,\widehat A:=\big(\mathscr P\otimes\mathscr P^{-1}\big)A$\,, the case $T=\Upsilon_{u,v}$ in~\eqref{suntunitar} then reads
\begin{equation}\label{rids}
\big\<{\sf Op}^\tau\!(A)u,v\big\>_{L^2(\G)}=\big\<A,{\sf Wig}^\tau_{u,v}\big\>_{\mathscr B^2(\G\times\wG)}=\big\<\widehat A,{\sf FWig}^\tau_{u,v}\big\>_{\mathscr B^2(\wG\times\G)}\,.
\end{equation}

\begin{Remark}\label{ofi}
{\rm In the diagram~\eqref{diagrama} we also included the Schr\"odinger representation ${\sf Sch}^\tau\!:=\Upsilon\circ{\sf C}^\tau$ defined for $\,K\in L^2(\G\times\G)$ by
\begin{equation}\label{radar}
\[{\sf Sch}^\tau\!(K)v\]\!(x):=\int_\G\!\Delta(y)^{-1/2}K\!\left(\tau(yx^{-1})x,xy^{-1}\right)\!v(y)\,d\m(y)\,. 
\end{equation}
For unimodular groups, it is shown in~\cite[Sect.\,7]{MR} how this is related to the theory of crossed product $C^*$-algebras associated to the action ${\rm left}$ of the group $\G$ by left translations on $C^*$-algebras $\A$ of complex functions defined on $\G$ (or more general). Actually~\eqref{radar} is the integrated form of a canonical (Schr\"odinger) covariant representation of the dynamical system $(\A,{\rm left},\G)$ and it is a representation in the Hilbert space $L^2(\G)$ of both the full and the reduced crossed products $\A\!\rtimes_{{\rm left}}\!\G$ and $\A\!\rtimes_{{\rm left}}^{\rm red}\!\G$\,. The connection to global pseudo-differential operators is given by applying the partial Plancherel transform. Besides yielding new $\A$-dependent symbol spaces for the quantization (the projective tensor product $L^1(\G)\overline\otimes\A$ is densely contained in both crossed products) and allowing a neat identification of the compact operators (those corresponding to the case $\A=C_0(\G)$), this formalism is also useful to provide Fredholm and spectral properties of pseudo-differential operators, as it was done in~\cite{Ma} for unimodular groups. The non-unimodular ones can be treated very similarly, so we will not give a detailed approach.
}
\end{Remark}

\begin{Remark}\label{likely}
{\rm For direct products $\G:=\G_1\times\G_2$ of type I second countable locally compact groups one has, after natural identifications,  
$$
{\sf Op}^{\tau_1\times\tau_2}_{\G_1\times\G_2}\!={\sf Op}^{\tau_1}_{\G_1}\!\otimes{\sf Op}^{\tau_2}_{\G_2}\,.
$$ 
For this one has to assume that the quantization parameter $\tau$ has the form $\tau_1\times\tau_2$\,, leading to ${\sf C}_\G^\tau={\sf C}_{\G_1}^{\tau_1}\otimes{\sf C}_{\G_2}^{\tau_2}$ (after suitable unitary identifications based on $(\G_1\times\G_2)\times(\G_1\times\G_2)\equiv(\G_1\times\G_1)\times(\G_2\times\G_2$))\,. The assertion is easy to verify starting from~\eqref{finop}. In~\cite[Sect.\,7.25]{Fo1} it is established that, in the type I setting, $\widehat{\G_1\times\G_2}$ can be identified with $\wG_1\times\wG_2$\,, and the Plancherel data on $\G_1\times\G_2$ (measure and Duflo-Moore family) decomposes as a tensor product of Plancherel data of the two groups (use unicity properties).
}
\end{Remark}

\begin{Remark}\label{liliky}
{\rm When $\G$ is Abelian (unimodular, in particular), its unitary dual $\wG$ is also an Abelian locally compact group in a natural way, Plancherel measures are Haar measures of this group and $\mathscr B^2(\wG)=L^2(\wG)$\,. By Pontryagin theory, the second dual is canonically equivalent to $\G$ and thus 
$$
\wG\times\widehat{\wG}\cong\wG\times\G\cong\G\times\wG\,.
$$ 
This can be used to show that in this case the global quantizations associated to $\G$ and to $\wG$ are equivalent.
}
\end{Remark}

{\bf Summary.} Although one can proceed with the $\tau$-quantizations, from now on we restrict to the basic case $\tau(\cdot)=\e$\,. For the convenience of the reader, and for further reference, we summarize.

\begin{equation}\label{operatoru}
[{\sf Op}(A)u](x)=\int_\G \int_{\widehat\G} \mathrm{Tr}\!\left(A(x,\xi){\rm D}_\xi^{\frac 1 2}{\pi_\xi}(yx^{-1}) \right)\!\Delta(y)^{-\frac 1 2}u(y) \,d\wm(\xi)\,d\m(y)\,,
\end{equation}
\begin{equation*}\label{radiar}
\[{\sf Sch}(K)v\]\!(x)=\int_\G\!\Delta(y)^{1/2}K\!\left(x,xy^{-1}\right)\!v(y)\,d\m(y)\,, 
\end{equation*}
\begin{equation*}\label{functiaker}
\ker_A(x,y)= \Delta(y)^{-\frac 1 2} \int_{\widehat\G}{\rm Tr}\left( a(x,\xi){\rm D}_\xi^{\frac 1 2}{\pi_\xi}(xy^{-1})^*\right)d\wm(\xi)\,, 
\end{equation*}
\begin{equation*}\label{functiav}
{\sf Wig}_{u,v}(x,\xi) = \int_\G \Delta\big(y^{ -1}x\big)^{\frac 1 2}\,u\big(x\big)\,v\big(y^{ -1}x\big)\,{\rm D}_\xi^{\frac 1 2}{\pi_\xi}(y)^*\,d\m(y)\,,
\end{equation*}
\begin{equation*}\label{functiaw}
{\sf FWig}_{u,v}(\xi,x)= \int_\G\,\Delta\big(x^{-1}y\big)^{ \frac 1 2}u\big(y)v(x^{-1}y\big){\rm D}_\xi^{\frac{1}{2}}\pi_\xi(y)^*d\m(y)\,\,.
\end{equation*}

\begin{Remark}\label{ujanschi}
{\rm One may introduce the notion of a {\it Weyl system}; this is then used to recover pseudo-differential operators. This has been done in~\cite{MR} in the unimodular case; the elements of the Weyl system were unitary operators, expressing basic commutation relations in exponentiated form. We indicate briefly and formally the corrections needed for non-unimodular groups, leading this time to unbounded operators. This will not be needed or developed below.

\smallskip
We try to write for $u,v\in L^2(\G)\,, x\in \G\,,\xi\in\wG$ and $\phi\in\H_\xi$
\begin{equation*}\label{midel}
{\sf FWig}_{u,v}(\xi,x) \phi = \int_\G \left[{\rm FW}(\xi,x) \,(u \otimes \phi)\right]\!(y) v(y) \,d\m(y)\,,
\end{equation*}
\begin{equation*}\label{modell}
{\sf Wig}_{u,v}(x,\xi)\phi = \int_\G \left[{\rm W}(x,\xi) \,(u \otimes \phi)\right]\!(y) v(y) \,d\m(y)\,,
\end{equation*}
for operators ${\rm FW}(x,\xi)$ and ${\rm W}(x,\xi)$ in $L^2(\G;\H_\xi)$\,. This would lead, at least formally, to formulae as
\begin{equation*}\label{baraon}
{\sf Op}(A)=\int_\G\int_{\wG}{\rm Tr}_\xi[A(x,\xi){\rm W}(x,\xi)]d\m(x)d\wm(\xi)=\int_\G\int_{\wG}{\rm Tr}_\xi[\widehat A(\xi,x){\rm FW}(\xi,x)]d\m(x)d\wm(\xi)\,.
\end{equation*}

A short computation shows that
\begin{equation*}\label{radup}
\big[{\rm W}_{u,v}^\tau(x,\xi)\Theta\big](y)=\Delta(y)^{-1/2}\big[{\rm D}^{1/2}_\xi\pi_\xi(yx^{-1})\big]\big(\Th(x)\big)\,,
\end{equation*}
\begin{equation*}\label{radalup}
\big[{\rm FW}_{u,v}^\tau(\xi,x)\Theta\big](y)=\Delta(y)^{1/2}\big[{\rm D}^{1/2}_\xi\pi_\xi(xy)^*\big]\big(\Th(xy)\big)\,,
\end{equation*}
and the appearance of the non-unimodular ingredients $\Delta$ and ${\rm D}^{1/2}_\xi$ makes them unbounded.
}
\end{Remark}

\begin{Remark}\label{Nguyen}
{\rm One is not bound to use irreducible representations. In certain cases we could make use of other tools, as soon as a reasonable version of the Plancherel transformation is available, meaning in loose terms a unitary equivalence defined on $L^2(\G)$\,, with explicit inverse and involving concepts from harmonic analysis. Actually, this will be seen in sections~\ref{intkid}  and~\ref{csoltkid}.
We mention briefly another example~\cite{Ng}. By definition, {\it a generalized motion group} is a semidirect product $\G:={\sf V}\rtimes{\sf K}$\,, where the compact group ${\sf K}$ acts on the Euclidean space ${\sf V}$ by orthogonal transformations. For every $\lambda\in{\sf V}$ let us define {\it the quasi-regular representation}
\begin{equation*}\label{qreg}
\pi^\lambda:\G\to\mathbb B\big[L^2({\sf K})\big]\,,\quad \big[\pi^\lambda({\sf v,k})\varphi\big]({\sf h}):=e^{2\pi i(\lambda\mid{\sf hv})}\varphi({\sf hk})\,.
\end{equation*}
Setting 
\begin{equation*}\label{pseudoplancherel}
{\sf F}:L^1(\G)\cap L^2(\G)\to L^2\big({\sf V};\mathbb B\big[L^2({\sf K})\big]\big)\,,\quad [{\sf F}(u)](\lambda):=\int_\G u({\sf v,k})\pi^\lambda({\sf v,k})d{\sf v}d{\sf k}\,,
\end{equation*}
one gets by extension an isomorphism ${\sf F}:L^2(\G)\to L^2\big({\sf V};\mathbb B\big[L^2({\sf K})\big]\big)$ and an inversion formula, leading to a global quantization 
$$
{\sf OP}:L^2(\G)\otimes L^2\big({\sf V};\mathbb B\big[L^2({\sf K})\big]\big)\to\mathbb B^2\big[L^2(\G)\big]
$$ 
given formally by
\begin{equation}\label{foloform}
[{\sf OP}(a)u]({\sf v,k}):=\int_\G\int_{\sf V} {\rm Tr}\big[a({\sf v,k};\lambda)\pi^\lambda\big({\sf v'-k'k^{-1}v,k'k^{-1}}\big)\big]u({\sf v',k'})d\lambda d{\sf v}'d{\sf k'}.
\end{equation}
We learned this approach from~\cite{Ng}, where the global quantization is developed in a detailed pseudo-differential calculus with H\"ormander-type symbol classes. The rather simple ingredients in~\eqref{foloform} are essential for the success of some of the analytical aspects of the approach. {\it But there is no direct use of the unitary dual in this case.} The quasi-regular representations $\pi^\lambda$ are irreducible if and only if $\lambda\ne 0$ and ${\sf K}$ is Abelian (which is not the most interesting case). In addition $\pi^\lambda$ and ${\pi^\lambda}'$ are actually unitarily equivalent if $\lambda$ and $\lambda'$ are in the same ${\sf K}$-orbit (the same sphere in $\R^n$ if ${\sf K}={\rm SO}(n)$)\,.
}
\end{Remark}

\section{Covariance properties}\label{secesion}

Let us set
\begin{equation}\label{unaltta}
\Pi(x):=\int^\oplus_{\wG}\!\pi_\xi(x)d\wm(\xi)\,,\quad\forall\,x\in\G\,.
\end{equation}
It is a unitary element of the von Neumann algebra $\int^\oplus_{\widehat\G}\mathbb B(\H_\xi)d\wm(\xi)$ of decomposable operators in the Hilbert space $\int^\oplus_{\widehat\G}\H_\xi d\wm(\xi)$\,. This von Neumann algebra also acts to the left and to the right on the direct integral Hilbert space $\mathscr B^2(\wG)$\,, so we may define ${\bf ad}_{\Pi(x)}:\mathscr B^2(\wG)\to\mathscr B^2(\wG)$ by the formula
\begin{equation*}\label{sofistic}
{\bf ad}_{\Pi(x)}\Big(\int^\oplus_{\widehat\G}\!T(\xi)d\wm(\xi)\Big):=\int^\oplus_{\widehat\G}\!\pi_\xi(x)T(\xi)\pi_\xi(x)^*d\wm(\xi)\,.
\end{equation*}
We also recall {\it the left regular representation} 
\begin{equation*}\label{leftregular}
{\sf Left}:\G\to\mathbb B\big[L^2(\G)\big]\,,\quad\big[{\sf Left}_z(u)\big](x):=u\big(z^{-1}x\big)
\end{equation*}
and {\it the right regular representation} 
\begin{equation*}\label{rightregular}
{\sf Right}:\G\to\mathbb B\big[L^2(\G)\big]\,,\quad\big[{\sf Right}_z(u)\big](x):=\Delta(z)^{1/2}\,u(xz)\,.
\end{equation*}

\begin{Proposition}\label{avenir}
For any $A\in\mathscr B^2\big(\G\times\wG\big)$ and $z\in\G$ one has
\begin{equation}\label{covariation}
{\sf Left}_z\circ{\sf Op}^\tau\!(A)\circ{\sf Left}_z^* = {\sf Op}^\tau\!\Big[\big({\sf Left}_z\!\otimes{\bf ad}_{\Pi(z)}\big)A\Big]\,.
\end{equation}
\end{Proposition}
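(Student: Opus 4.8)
The plan is to verify \eqref{covariation} by a direct computation based on the explicit formula \eqref{opp} for ${\sf Op}^\tau$, comparing the two operators through their action on a test function $u$ (equivalently, through their integral kernels). Since ${\sf Left}$ is a unitary representation, ${\sf Left}_z^*={\sf Left}_{z^{-1}}$ and $\big({\sf Left}_z^* u\big)(y)=u(zy)$, so that
\[
\big[{\sf Left}_z\circ{\sf Op}^\tau\!(A)\circ{\sf Left}_z^*\,u\big](x)=\big[{\sf Op}^\tau\!(A)\,{\sf Left}_z^* u\big](z^{-1}x)\,.
\]
First I would insert $z^{-1}x$ into \eqref{opp} and use $(z^{-1}x)^{-1}=x^{-1}z$, turning the argument of $\pi_\xi$ into $yx^{-1}z$ and the spatial argument of $A$ into $\tau(yx^{-1}z)(z^{-1}x)$. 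Before anything else I would record that the right-hand side of \eqref{covariation} is well posed: ${\bf ad}_{\Pi(z)}$ is conjugation by the decomposable unitary $\Pi(z)$ of \eqref{unaltta}, hence fibrewise an isometry of $\mathbb B_2(\H_\xi)$, so ${\sf Left}_z\otimes{\bf ad}_{\Pi(z)}$ is unitary on $\mathscr B^2(\G\times\wG)$ and $\big({\sf Left}_z\otimes{\bf ad}_{\Pi(z)}\big)A$ is again an admissible symbol, with $\big[\big({\sf Left}_z\otimes{\bf ad}_{\Pi(z)}\big)A\big](x,\xi)=\pi_\xi(z)A(z^{-1}x,\xi)\pi_\xi(z)^*$.

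Next, the substitution $y\mapsto z^{-1}y$, legitimate by left invariance of $\m$ in \eqref{eq:change of variables}, produces the factor $\Delta(z^{-1}y)^{-1/2}=\Delta(z)^{1/2}\Delta(y)^{-1/2}$ and replaces $u(zy)$ by $u(y)$, while $\pi_\xi(z^{-1}yx^{-1}z)$ factors as $\pi_\xi(z)^*\pi_\xi(yx^{-1})\pi_\xi(z)$. The crucial step is then to absorb the spurious $\Delta(z)^{1/2}$: taking the positive square root of the semi-invariance relation \eqref{eq:invrel} gives ${\rm D}_\xi^{1/2}\pi_\xi(z)^*=\Delta(z)^{-1/2}\pi_\xi(z)^*{\rm D}_\xi^{1/2}$, which I would use to commute ${\rm D}_\xi^{1/2}$ past the leading $\pi_\xi(z)^*$; the emerging $\Delta(z)^{-1/2}$ cancels exactly the $\Delta(z)^{1/2}$ coming from the measure substitution. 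Cyclicity of ${\rm Tr}_\xi$ then carries the remaining $\pi_\xi(z)^*\cdots\pi_\xi(z)$ onto the symbol, replacing $A(\,\cdot\,,\xi)$ by $\pi_\xi(z)A(\,\cdot\,,\xi)\pi_\xi(z)^*$ inside the trace, which is precisely the action of ${\bf ad}_{\Pi(z)}$.

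After these cancellations both sides carry identical factors $\Delta(y)^{-1/2}$, ${\rm D}_\xi^{1/2}\pi_\xi(yx^{-1})$ and ${\bf ad}_{\Pi(z)}$, so the last point is to match the spatial argument of $A$: ${\sf Left}_z$ acting on the first variable of the symbol replaces it by $z^{-1}(\,\cdot\,)$, and one checks that the argument produced by the computation, $\tau(z^{-1}yx^{-1}z)(z^{-1}x)$, coincides with $z^{-1}\tau(yx^{-1})x$. For the Kohn--Nirenberg quantization $\tau(\cdot)=\e$ to which we have restricted this is immediate, both arguments reducing to $z^{-1}x$; in general it reflects the equivariance $\tau(z^{-1}wz)=z^{-1}\tau(w)z$ of $\tau$ under inner automorphisms. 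I expect the main obstacle to be purely bookkeeping: keeping the order of the noncommuting factors $\pi_\xi(z)^*,{\rm D}_\xi^{1/2},\pi_\xi(yx^{-1})$ correct through the cyclic permutation and tracking the modular weights so that the two occurrences of $\Delta(z)^{\pm1/2}$ cancel. To make these formal manipulations rigorous I would first establish the identity on a dense class of symbols, for which the iterated integrals converge absolutely (e.g.\ using the inversion domain in Theorem \ref{sec:four-planch-transf}), and then extend it to all of $\mathscr B^2(\G\times\wG)$ by the unitarity of ${\sf Op}^\tau$ and of ${\sf Left}_z\otimes{\bf ad}_{\Pi(z)}$.
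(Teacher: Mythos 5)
Your computation is correct and, as you acknowledge at the end, it needs the density-plus-unitarity argument to legitimize the cyclic permutation of the trace past the unbounded ${\rm D}_\xi^{1/2}$ and the interchange of iterated integrals. The paper takes a different, more structural route, which it explicitly prefers to the direct one you chose (``one can show this starting from \eqref{opp}, but it is more illuminating to rely on \eqref{finop}''): it factors ${\sf Op}^\tau=\Upsilon\circ{\sf C}^\tau\circ({\sf id}\otimes\mathscr P)^{-1}$ and proves three separate intertwining identities, namely ${\bf ad}_{{\sf Left}_z}\circ\Upsilon=\Upsilon\circ({\sf Left}_z\otimes{\sf Left}_z)$, then $({\sf Left}_z\otimes{\sf Left}_z)\circ{\sf C}={\sf C}\circ\big({\sf Left}_z\otimes[{\sf Left}_z{\sf Right}_z]\big)$, and finally $\mathscr P\circ[{\sf Left}_z{\sf Right}_z]={\bf ad}_{\Pi(z)}\circ\mathscr P$; the last identity is exactly where the semi-invariance \eqref{eq:invrel} and the change-of-variables formula \eqref{eq:change of variables} enter, i.e.\ the same place they enter your computation when you commute ${\rm D}_\xi^{1/2}$ past $\pi_\xi(z)^*$ and absorb the factor $\Delta(z)^{1/2}$. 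Chaining the three identities yields \eqref{formfinal}. What the factorized route buys is that each identity relates unitary, everywhere-defined maps between $L^2$-type spaces, so no trace manipulations with unbounded operators are required and no separate extension step is needed; what your route buys is a self-contained verification directly on the quantization formula, which also makes the mechanism of the cancellation visible. One point you actually handle more explicitly than the paper: its displayed intermediate computation is written for ${\sf C}$ (i.e.\ for $\tau(\cdot)=\e$), whereas for a general $\tau$ both arguments need, exactly as you observe, the equivariance $\tau(z^{-1}wz)=z^{-1}\tau(w)z$ under inner automorphisms in order to match the first argument of the symbol.
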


\begin{proof}
One can show this starting from~\eqref{opp}, but it is more illuminating to rely on~\eqref{finop}. Setting for any unitary operator $S:L^2(\G)\to L^2(\G)$
$$
{\bf ad}_S\!:\mathbb B^2\big[L^2(\G)\big]\to\mathbb B^2\big[L^2(\G)\big]\,,\quad{\bf ad}_S(T):=S\circ T\circ S^*,
$$
one has to prove $\,{\bf ad}_{{\sf Left}_z}\!\circ{\sf Op}^\tau\!={\sf Op}^\tau\!\circ\big({\sf Left}_z\!\otimes{\bf ad}_{\Pi(z)}\big)$\,, i.e.
\begin{equation}\label{formfinal}
{\bf ad}_{{\sf Left}_z}\!\circ\Upsilon\circ{\sf C}^\tau\!\circ({\sf id}\otimes\mathscr P)^{-1}\!=\Upsilon\circ{\sf C}^\tau\!\circ({\sf id}\otimes\mathscr P)^{-1}\!\circ\big({\sf Left}_z\!\otimes{\bf ad}_{\Pi(z)}\big)\,.
\end{equation}
By using~\eqref{intket}, one gets immediately
\begin{equation}\label{lainceput}
{\bf ad}_{{\sf Left}_z}\!\circ\Upsilon=\Upsilon\circ\big({\sf Left}_z\otimes{\sf Left}_z\big)\,.
\end{equation}
Then
$$
\begin{aligned}
\Big(\big[\big({\sf Left}_z\otimes{\sf Left}_z\big)\circ{\sf C}\big]K\Big)(x,y)&=[{\sf C}(K)]\big(z^{-1}x,z^{-1}y\big)\\
&=\Delta\big(z^{-1}y\big)^{-1/2}\,K\big(z^{-1}x,z^{-1}xy^{-1}z\big)\\
&=\Delta\big(y\big)^{-1/2}\big[\big({\sf Left}_z\otimes{[\sf Left}_z{\sf Right}_z]\big)(K)\big]\big(x,xy^{-1}\big)\\
&=\Big(\big[{\sf C}\circ\big({\sf Left}_z\otimes{[\sf Left}_z{\sf Right}_z]\big)\big]K\Big)(x,y)\,,
\end{aligned}
$$
so one gets
\begin{equation}\label{lasfarsit}
\big({\sf Left}_z\otimes{\sf Left}_z\big)\circ{\sf C}={\sf C}\circ\big({\sf Left}_z\otimes{[\sf Left}_z{\sf Right}_z]\big)\,.
\end{equation}
Finally, using the definitions, the semi-invariance~\eqref{eq:invrel} of the Duflo-Moore operators and changes of variables relying on~\eqref{eq:change of variables}, one computes
$$
\begin{aligned}
\Big[\big({\mathscr P}\circ\big[{\sf Left}_z{\sf Right}_z]\big)u\Big](\xi)&=\int_\G u\big(z^{-1}xz\big)\Delta(z)^{1/2}\pi_\xi(x){\rm D}_\xi^{1/2}d\m(x)\\
&=\int_\G u(x)\Delta(z)^{-1/2}\pi_\xi\big(z)\pi_\xi(x)\pi_\xi(z)^*{\rm D}_\xi^{1/2}d\m(x)\\
&=\pi_\xi(z)\int_\G u(x)\pi_\xi(x){\rm D}_\xi^{1/2}d\m(x)\pi_\xi(z)^*\\
&=\big[\Pi(z)\mathscr P(u)\Pi(z)^*\big](\xi)\,,
\end{aligned}
$$
to be written
\begin{equation}\label{insfarshit}
{\mathscr P}\circ\big[{\sf Left}_z{\sf Right}_z]={\bf ad}_{\Pi(z)}\circ\mathscr P\,.
\end{equation}
Now~\eqref{formfinal} follows from~\eqref{lainceput},~\eqref{lasfarsit} and~\eqref{insfarshit} (in which we change $z$ into $z^{-1}$) and the proof is finished.
\end{proof}

\begin{Corollary}\label{suvenir}
For any $\,T\in\mathbb B^2\big[L^2(\G)\big]$ and $z\in\G$ one has
\begin{equation}\label{covarication}
{\sf Wig}^\tau\big({\sf Left}_z\circ T\circ{\sf Left}_z^*\big) = \big({\sf Left}_z\!\otimes{\bf ad}_{\Pi(z)}\big)\big({\sf Wig}^\tau(T)\big)\,.
\end{equation}
In particular, for every $u,v\in L^2(\G)$ one has
\begin{equation}\label{coprevarication}
{\sf Wig}^\tau_{{\sf Left}_z\!(u),{\sf Left}_z\!(v)} = \big({\sf Left}_z\!\otimes{\bf ad}_{\Pi(z)}\big)\big({\sf Wig}^\tau_{u,v}\big)\,.
\end{equation}
\end{Corollary}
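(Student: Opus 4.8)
The plan is to derive Corollary~\ref{suvenir} from Proposition~\ref{avenir} purely by abstract manipulation, without touching the explicit integral formula \eqref{opp}. The key observation is that $\mathsf{Wig}^\tau=(\mathsf{Op}^\tau)^{-1}$ by Definition~\ref{toate}(a), so the covariance statement \eqref{covariation} for $\mathsf{Op}^\tau$ should transfer directly to $\mathsf{Wig}^\tau$ by inverting both sides. Writing \eqref{covariation} in the operator-composition notation of the proof of Proposition~\ref{avenir}, namely $\mathbf{ad}_{\mathsf{Left}_z}\circ\mathsf{Op}^\tau=\mathsf{Op}^\tau\circ(\mathsf{Left}_z\otimes\mathbf{ad}_{\Pi(z)})$, I would take inverses of both sides as maps $\mathbb B^2[L^2(\G)]\to\mathbb B^2[L^2(\G)]$.

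First I would note that $\mathbf{ad}_{\mathsf{Left}_z}$ is itself invertible with inverse $\mathbf{ad}_{\mathsf{Left}_z^*}=\mathbf{ad}_{\mathsf{Left}_{z^{-1}}}$, and that $\mathsf{Left}_z\otimes\mathbf{ad}_{\Pi(z)}$ is invertible with inverse $\mathsf{Left}_z^*\otimes\mathbf{ad}_{\Pi(z)^*}$ (these are unitaries on the respective Hilbert spaces). Taking inverses in the identity above and using $(\mathsf{Op}^\tau)^{-1}=\mathsf{Wig}^\tau$ gives
\begin{equation*}
\big(\mathsf{Left}_z\otimes\mathbf{ad}_{\Pi(z)}\big)^{-1}\circ\mathsf{Wig}^\tau=\mathsf{Wig}^\tau\circ\mathbf{ad}_{\mathsf{Left}_z}^{-1}\,.
\end{equation*}
Replacing $z$ by $z^{-1}$ (so that $\mathsf{Left}_{z^{-1}}=\mathsf{Left}_z^*$ and the adjoints flip) then rearranges this into exactly \eqref{covarication}:
\begin{equation*}
\mathsf{Wig}^\tau\big(\mathsf{Left}_z\circ T\circ\mathsf{Left}_z^*\big)=\big(\mathsf{Left}_z\otimes\mathbf{ad}_{\Pi(z)}\big)\big(\mathsf{Wig}^\tau(T)\big)\,,
\end{equation*}
where I have also unpacked $\mathbf{ad}_{\mathsf{Left}_z}(T)=\mathsf{Left}_z\circ T\circ\mathsf{Left}_z^*$ on the left.

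For the special case \eqref{coprevarication}, I would apply \eqref{covarication} to the rank-one operator $T=\Upsilon_{u,v}$ and use \eqref{lainceput} from the previous proof, which records $\mathbf{ad}_{\mathsf{Left}_z}\circ\Upsilon=\Upsilon\circ(\mathsf{Left}_z\otimes\mathsf{Left}_z)$; evaluated on the elementary tensor $u\otimes v$ this says precisely that $\mathsf{Left}_z\circ\Upsilon_{u,v}\circ\mathsf{Left}_z^*=\Upsilon_{\mathsf{Left}_z(u),\mathsf{Left}_z(v)}$. Substituting into \eqref{covarication} and using the notation $\mathsf{Wig}^\tau_{u,v}:=\mathsf{Wig}^\tau(\Upsilon_{u,v})$ yields \eqref{coprevarication} immediately.

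The only point requiring care — and the step I expect to be the main obstacle — is the bookkeeping of adjoints and the $z\mapsto z^{-1}$ substitution, since $\mathbf{ad}_{\Pi(z)}$ is built from conjugation by $\pi_\xi(z)$ and the semi-invariance relation \eqref{eq:invrel} must be respected when one inverts; one must check that $\mathbf{ad}_{\Pi(z)}^{-1}=\mathbf{ad}_{\Pi(z)^*}=\mathbf{ad}_{\Pi(z^{-1})}$ is consistent on $\mathscr B^2(\wG)$, which follows because $\Pi(z)$ is unitary in the decomposable von Neumann algebra (as stated just after \eqref{unaltta}) so conjugation genuinely reverses. Given this, the whole argument is a formal consequence of unitarity and Proposition~\ref{avenir}, and no new computation with the integral kernels is needed.
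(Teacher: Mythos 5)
Your argument is correct and is essentially the paper's own proof: the first identity follows formally from Proposition~\ref{avenir} because ${\sf Wig}^\tau$ and ${\sf Op}^\tau$ are mutually inverse (your inversion-plus-$z\mapsto z^{-1}$ bookkeeping is a valid way of spelling this out), and the second follows by taking $T=\Upsilon_{u,v}$ once one knows ${\sf Left}_z\circ\Upsilon_{u,v}\circ{\sf Left}_z^*=\Upsilon_{{\sf Left}_z(u),{\sf Left}_z(v)}$. The only cosmetic difference is that you obtain this last identity from \eqref{lainceput} applied to elementary tensors, whereas the paper uses the general formula $S\Upsilon_{u,v}S^*=\Upsilon_{S(u),\tilde S(v)}$ with $\tilde S(v)=\overline{S(\overline v)}$ together with the observation that ${\sf Left}_z$ commutes with complex conjugation.
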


\begin{proof}
Since ${\sf Wig}^\tau$ and ${\sf Op}^\tau$ are reciprocal maps,~\eqref{covarication} follows immediately from~\eqref{covariation}.

\smallskip
To get~\eqref{coprevarication} one sets $T:=\Upsilon_{u,v}$\,, uses ${\sf Wig}^\tau_{u,v}:={\sf Wig}^\tau\!(\Upsilon_{u,v})$ and the formulas $S\Upsilon_{u,v}S^*=\Upsilon_{S(u),\tilde S(v)}$ with $\tilde S(v):=\overline{S(\overline v)}$\,, noticing that ${\sf Left}_z^*={\sf Left}_{z^{-1}}$ commutes with complex conjugation.
\end{proof}

\section{Restrictions and extensions of the pseudo-differential calculus}\label{ferfelic}

It is interesting and useful to extend the quantization to more general classes of symbols and to operators which are only densely defined on $L^2(\G)$\,.

\smallskip
One option is to use {\it the Bruhat space} $\mathcal D(\G)$ and its strong dual $\mathcal D'(\G)$\,. They are defined in~\cite{Br} and are meant to extend to the locally compact group case the spaces of smooth compactly supported test functions $C_{\rm c}^\infty(\G)$ and its dual, formed of distributions, that only makes sense for Lie groups. We refer to~\cite{Br} and to~\cite[Sect.\,5]{MR} for the definitions and the basic properties; for Lie groups they have the usual meaning. In~\cite[Sect.\,5]{MR} they were used to extend the global quantization for the case of unimodular groups. The calculus being now available in general form, the same treatment is available, since the Bruhat spaces need no unimodularity condition and the new features of the non-unimodular Plancherel transform are easily taken into account. For completeness, we only state briefly the extension result. If $\G$ is a Lie group then $\mathcal D(\G)=C_{\rm c}^\infty(\G)$\,, with the usual inductive limit topology; for the general case, the reader should consult~\cite{Br}.

\smallskip
One defines $\mathscr D(\wG):={\fscr P}[\mathcal D(\G)]$\,, also transferring by the Plancherel map the locally convex topology from the Bruhat space $\mathcal D(\G)$\,. One has continuous and dense embeddings $\mathcal D(\G)\hookrightarrow\mathcal C_{\rm c}(\G)\hookrightarrow L^2(\G)$\,, so $\mathscr D(\wG)$ is a dense subspace of $\mathscr B^2(\wG)$\,. Unfortunately, $\mathscr D(\wG)$ is difficult to describe explicitly even in some of the simplest cases. By the Kernel Theorem for Bruhat spaces~\cite[Sect.\,5]{Br} one has
\begin{equation*}\label{inseninare}
\mathcal D(\G\times\G)\cong\mathcal D(\G)\,\overline\otimes\,\mathcal D(\G)\hookrightarrow L^2(\G\times\G)
\end{equation*}
continuously and densely. The symbol $\overline{\otimes}$ indicates the projective tensor product, but we recall that the spaces involved here and below are known to be nuclear. 
This refers in particular to
\begin{equation*}\label{inserare}
\mathscr D\big(\G\times\wG\big):=\mathcal D(\G)\,\overline\otimes\,\mathscr D(\wG)\hookrightarrow\mathscr B^2(\G\times\wG)\,.
\end{equation*}
Taking into account the strong dual, one gets a Gelfand triple $\mathscr D\big(\G\times\wG\big)\hookrightarrow\mathscr B^2\big(\G\times\wG\big)\hookrightarrow\mathscr D'\big(\G\times\wG\big)$\,.

\begin{Proposition}\label{quadrat}
The pseudo-differential calculus $\,{\sf Op}:L^2(\G)\otimes\mathscr B^2(\wG)\rightarrow\mathbb B^2\big[L^2(\G)\big]$ restricts to a topological isomorphism $\,{\sf Op}:\mathscr D\big(\G\times\wG\big)\rightarrow\mathbb L\big[\mathcal D'(\G),\mathcal D(\G)\big]$ and extends to a topological isomorphism $\,{\sf Op}:\mathscr D'\big(\G\times\wG\big)\rightarrow\mathbb L\big[\mathcal D(\G),\mathcal D'(\G)\big]$\,. 
\end{Proposition}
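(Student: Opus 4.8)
The plan is to leverage the fact that the Kohn-Nirenberg quantization $\sf Op$ is already known (by \eqref{finop}) to be a unitary isomorphism $\mathscr B^2(\G\times\wG)\to\mathbb B^2[L^2(\G)]$, and to transport the two Gelfand-triple refinements through the factoring $\sf Op=\Upsilon\circ{\sf C}\circ({\sf id}\otimes\mathscr P)^{-1}$. The key observation is that each of the three constituent maps is a topological isomorphism not only at the Hilbert-space level but also at the level of the Bruhat test-function space and its dual. First I would record that, by the very definition $\mathscr D(\wG):=\mathscr P[\mathcal D(\G)]$ with the transported topology, the partial Plancherel map ${\sf id}\otimes\mathscr P$ is a topological isomorphism $\mathcal D(\G)\,\overline\otimes\,\mathcal D(\G)\to\mathcal D(\G)\,\overline\otimes\,\mathscr D(\wG)=\mathscr D(\G\times\wG)$, and dually on the strong duals. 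Second, the change-of-variable map ${\sf C}$ of \eqref{lipsa} is, for $\tau(\cdot)=\e$, a composition of a smooth change of variables $(x,y)\mapsto(x,xy^{-1})$ with multiplication by the smooth positive factor $\Delta(y)^{-1/2}$; such operations preserve $\mathcal D(\G\times\G)$ and act as topological isomorphisms on it, hence also on $\mathcal D'(\G\times\G)$ by transposition.

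The remaining link is the identification, via $\Upsilon$ of \eqref{intket}, of kernels with operators. Here I would invoke the Kernel Theorem for Bruhat spaces \cite[Sect.\,5]{Br}, already cited in the excerpt: it gives $\mathcal D(\G\times\G)\cong\mathcal D(\G)\,\overline\otimes\,\mathcal D(\G)$ and, by nuclearity, the dual statement $\mathcal D'(\G\times\G)\cong\mathbb L[\mathcal D(\G),\mathcal D'(\G)]$ with $\Upsilon$ realizing precisely the passage from a kernel $K$ to its associated operator $\Upsilon(K)$. Matching the two Hilbert-space endpoints of the Gelfand triple correctly, a kernel in $\mathcal D(\G\times\G)$ produces an operator sending $\mathcal D'(\G)$ continuously into $\mathcal D(\G)$ (a smoothing operator), i.e.\ $\Upsilon:\mathcal D(\G\times\G)\xrightarrow{\sim}\mathbb L[\mathcal D'(\G),\mathcal D(\G)]$, while a distributional kernel in $\mathcal D'(\G\times\G)$ yields a continuous map $\mathcal D(\G)\to\mathcal D'(\G)$. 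Composing the three isomorphisms and their dual versions then produces exactly the two claimed statements, the restriction
\begin{equation*}
{\sf Op}:\mathscr D\big(\G\times\wG\big)\xrightarrow{\ \sim\ }\mathbb L\big[\mathcal D'(\G),\mathcal D(\G)\big]
\end{equation*}
and the extension
\begin{equation*}
{\sf Op}:\mathscr D'\big(\G\times\wG\big)\xrightarrow{\ \sim\ }\mathbb L\big[\mathcal D(\G),\mathcal D'(\G)\big],
\end{equation*}
both topological. The extension statement is the adjoint/transpose of the restriction statement under the duality pairing \eqref{suntunitar}, so in practice I would prove the test-function version and obtain the distributional version by transposition, using that all spaces in sight are reflexive and nuclear.

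The main obstacle I anticipate is not the abstract composition but verifying that the partial Plancherel map $\sf P$ genuinely restricts to a \emph{topological} isomorphism on the Bruhat spaces with the transported topology, and — more delicately — that ${\sf id}\otimes\mathscr P$ is well behaved on the completed projective tensor products. The definitional move $\mathscr D(\wG):=\mathscr P[\mathcal D(\G)]$ makes $\mathscr P$ a topological isomorphism $\mathcal D(\G)\to\mathscr D(\wG)$ essentially by fiat, but tensoring one factor of a projective tensor product with a topological isomorphism, and commuting this with the Kernel Theorem identification, requires the nuclearity invoked in the excerpt; I would lean on that nuclearity (and the stability of $\overline\otimes$ under isomorphisms of one factor) rather than attempt a bare-hands estimate. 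The only genuinely non-unimodular ingredient, the factor $\Delta^{-1/2}$ inside $\sf C$, is harmless here since $\Delta$ is a continuous positive homomorphism, hence smooth on a Lie group and multiplier-preserving on $\mathcal D(\G)$; this is precisely why, as the surrounding text notes, the unimodular treatment of \cite[Sect.\,5]{MR} carries over essentially verbatim.
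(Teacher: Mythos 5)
Your proposal follows essentially the same route as the paper: factor ${\sf Op}=\Upsilon\circ{\sf C}\circ({\sf id}\otimes\mathscr P)^{-1}$, treat the Plancherel leg as an isomorphism by the very definition $\mathscr D(\wG):=\mathscr P[\mathcal D(\G)]$, invoke the Kernel Theorem for Bruhat spaces to handle $\Upsilon$, and reduce everything to checking that ${\sf C}$ is a topological isomorphism of $\mathcal D(\G\times\G)$ --- which is precisely the one verification the paper singles out, the rest being delegated to the unimodular treatment in \cite[Sect.\,5]{MR} and mirrored in the detailed proof of the Schwartz-space analogue. The proposal is correct and consistent with the paper's argument.
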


We denoted by $\mathbb L(\mathcal M,\mathcal N)$ the space of all the linear continuous operators between the topological vector spaces $\mathcal M$ and $\mathcal N$. Besides applying the Kernel Theorem, one has to check that ${\sf C}:\mathcal D(\G\times\G)\to\mathcal D(\G\times\G)$ is a linear homomorphism. If $\tau$-quantizations are considered, clearly one needs specific requirements on $\tau$ to ensure the same property for ${\sf C}^\tau$ introduced in~\eqref{lipsa}.

\begin{Remark}\label{cow}
{\rm Note that $\D(\G)$ is left invariant by the (left and right) translations. The covariance relations~\eqref{covariation} and~\eqref{covarication} remain true in the extended setting, with suitable reinterpretations.}
\end{Remark}

Extending concepts as Schwartz functions and tempered distributions to Lie groups is tricky. For connected simply connected nilpotent groups this is done quite easily by composing with the inverse of the exponential map $\exp:\g\to\G$ elements of the usual Schwartz space $\S(\g)$ of the Lie algebra. Even if for the more general exponential groups, by definition, the exponential map is a diffeomorphism, the same strategy is insufficient, since in general the resulting $\S(\G)$ would be incomparable with the Lebesgue spaces $L^p(\G)$\,. A convenient modification has been proposed in~\cite{DG} for connected simply connected solvable groups (named Solvable from now on), and it may be used for our extension purposes.

\smallskip
So let us fix a type $I$ Solvable Lie group $\G$ with Lie algebra $\g$ and a basis $\{X_1,\dots,X_N\}$ of $\g$\,. The following definition can be extracted from~\cite{DG}:

\begin{Definition}\label{david}
The Borel function $\sigma:\G\to\R_+$ is {\rm an admissible weight} if
\begin{enumerate}
\item[(i)] 
there exists $q\in\N$ such that $\,\int_\G\frac{d\m(x)}{\sigma(x)^q}<\infty$\,,
\item[(ii)]
there exists $m\in\N$ such that $\,\Delta(x)\le\si(x)^m$ for every $x\in\G$\,,
\item[(iii)]
there exist $r\in\N$ and $C\ge 1$ such that $\,C^{-1}\si(x)^{1/r}\le\si\big(x^{-1}\big)\le C\si(x)^r$ for every $x\in\G$\,,
\item[(iv)]
there exist $s\in\N$ and $C\ge 1$ such that $\,\si(xy)\le C\si(x)^s\si(y)^s$ for every $x,y\in\G$\,.
\end{enumerate}
\end{Definition}

Actually~\cite{DG} starts with the construction of an explicit function $\si$, depending on some choices and a special realization of the Solvable group $\G$\,, and then the properties above are deduced. But the subsequent results of~\cite{DG} then only depend on the properties and not on the explicit form of the weight $\si$.

\begin{Definition}\label{guillou}
An admissible weight $\si$ on the Solvable group $\G$ being given, one defines {\rm the Schwartz-type space} $\S_\si(\G)$ to be the family of all smooth complex functions $w$ on $\G$ such that all the seminorms
\begin{equation}\label{seminorms}
\p\!w\!\p_{k,\alpha}^\infty\,:=\,\p\!\si^k X^\alpha w\!\p_{L^\infty(\G)}\,,\quad k\in\N\,,\,\alpha\in\N^N
\end{equation}
are finite.
\end{Definition}

The function $\si$ being admissible, in~\cite{DG} it is shown that {\it $\S_\si(\G)$ is a nuclear Fr\'echet space with continuous dense embedings $\D(\G)\hookrightarrow\S_\si(\G)\hookrightarrow L^p(\G)$ for every $p\in[1,\infty]$}\,. See also~\cite{Sch}\,. The same topological vector space emerges if in~\eqref{seminorms} one uses $L^p$-norms instead of the uniform norm, right Haar measures instead of left Haar measures; the convention upon $X^\alpha$ (right or left invariant) is also not important. At least for the particular choice of $\si$ indicated in~\cite{DG}, one gets $\S(\g)=\S_\si(\G)\circ\exp$ in the nilpotent case, while for groups with polynomial growth $\S_\si(\G)$ can be identified with $\S(\R^N)$\,.

\smallskip
The topological dual of $\S_\si(\G)$ with the strong dual topology is denoted by $\S'_\si(\G)$\,, its elements are {\it tempered distributions}. It is shown that they are finite sums of $X^\alpha$-derivatives of continuous functions that are growing slowly at infinity with respect to the weight (i.e. satisfying $|u|\le C\si^l$ for some $l\in\N$)\,. 

\smallskip
To state our extension result, we also define $\mathscr S_\si\big(\wG\big):=\mathscr P[\S_\si(\G)]\hookrightarrow \mathscr B^2(\wG)$\,, with the Fr\'echet structure transported from $\S_\si(\G)$\,. Once again this is a space which is difficult to describe in concrete terms even in simple cases. Then we set $\mathscr S'_\si(\G\times\wG)$ for the topological dual of the (projective) tensor product $\S_\si(\G)\overline\otimes\mathscr S_\si(\wG)\hookrightarrow\mathscr B^2(\G\times\wG)$\,. 

\begin{Proposition}\label{quadrat}
The pseudo-differential calculus $\,{\sf Op}:L^2(\G)\otimes\mathscr B^2(\wG)\rightarrow\mathbb B^2\big[L^2(\G)\big]$ 
extends to a topological isomorphism $\,{\sf Op}:\mathscr S'_\si\big(\G\times\wG\big)\rightarrow\mathbb L\big[\mathcal S_\si(\G),\mathcal S'_\si(\G)\big]$\,, where the space of distributions is endowed with the strong dual topology, while the operator space carries the topology of uniform convergence on bounded subsets. 
\end{Proposition}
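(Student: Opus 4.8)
The plan is to run the argument through the factorization \eqref{finop}, which for $\tau(\cdot)=\e$ reads ${\sf Op}=\Upsilon\circ{\sf C}\circ({\sf id}\otimes\mathscr P)^{-1}$, and to show that each of the three factors is a topological isomorphism at the level of the Schwartz-type test spaces and, by transposition, at the level of their strong duals. The natural ambient space for the kernels is $\S_\si(\G)\,\overline\otimes\,\S_\si(\G)$: a finite product of Solvable groups is again Solvable, and one checks directly that $\tilde\si(x,y):=\si(x)\si(y)$ satisfies (i)--(iv) of Definition \ref{david} on $\G\times\G$ (the integrability, the modular bound $\Delta_{\G\times\G}=\Delta\otimes\Delta$, the inversion estimate and the submultiplicativity all factor). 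Hence $\S_{\tilde\si}(\G\times\G)$ is well defined and, by the nuclearity of $\S_\si(\G)$, coincides with $\S_\si(\G)\,\overline\otimes\,\S_\si(\G)$. The Schwartz Kernel Theorem for nuclear Fréchet spaces then gives the identifications $\mathbb L\big[\S'_\si(\G),\S_\si(\G)\big]\cong\S_{\tilde\si}(\G\times\G)$ and $\mathbb L\big[\S_\si(\G),\S'_\si(\G)\big]\cong\S'_{\tilde\si}(\G\times\G)$, under which $\Upsilon$ becomes the canonical kernel-to-operator map; this settles the first factor at both levels.

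The factor ${\sf id}\otimes\mathscr P$ is harmless by construction. Since $\mathscr S_\si(\wG):=\mathscr P[\S_\si(\G)]$ carries, by definition, the Fréchet topology transported from $\S_\si(\G)$, the map $\mathscr P:\S_\si(\G)\to\mathscr S_\si(\wG)$ is a topological isomorphism; tensoring with the identity on the nuclear space $\S_\si(\G)$ produces a topological isomorphism $\S_\si(\G)\,\overline\otimes\,\S_\si(\G)\to\S_\si(\G)\,\overline\otimes\,\mathscr S_\si(\wG)$, whose transpose is a topological isomorphism $\mathscr S'_\si(\G\times\wG)\to\S'_{\tilde\si}(\G\times\G)$. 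All spaces here are nuclear, Montel and thus reflexive, so transposition carries topological isomorphisms to topological isomorphisms of strong duals.

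The substance of the argument is the claim that ${\sf C}$ is a topological automorphism of $\S_{\tilde\si}(\G\times\G)$, and this is exactly where the treatment of \cite{MR} must be supplemented: for $\D(\G)=C_{\rm c}^\infty(\G)$ invariance under ${\sf C}$ is immediate from compact support, whereas here the decay and the modular prefactor must be controlled by the weight. Two points are to be checked. First, the change of variables $(x,y)\mapsto(x,xy^{-1})$ underlying \eqref{lipsa} and its inverse \eqref{pedos} are smooth diffeomorphisms of $\G\times\G$; left-invariant differential operators are pulled back to left-invariant operators with smooth coefficients built from the group multiplication, and these coefficients together with the pulled-back weights are dominated by powers of $\tilde\si$ thanks to submultiplicativity (iv) and the two-sided inversion bound (iii) (for instance $\si(y)\le C\,\si(x)^{s}\si(xy^{-1})^{rs}$). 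Second, the prefactor obeys $\Delta(y)^{-1/2}=\Delta(y^{-1})^{1/2}\le\si(y^{-1})^{m/2}\le C^{m/2}\si(y)^{rm/2}$ by (ii) and (iii), so multiplication by it (and by the analogous positive power of $\Delta$ occurring in \eqref{pedos}) preserves $\S_{\tilde\si}(\G\times\G)$ continuously, the faster-than-polynomial decay absorbing the growth. Combining, each seminorm of ${\sf C}(K)$ is bounded by finitely many seminorms of $K$, and symmetrically for ${\sf C}^{-1}$, so ${\sf C}$ is a topological automorphism of $\S_{\tilde\si}(\G\times\G)$ and, by transposition, of $\S'_{\tilde\si}(\G\times\G)$.

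Assembling the three isomorphisms at the distributional level yields the asserted topological isomorphism $\,{\sf Op}:\mathscr S'_\si(\G\times\wG)\to\mathbb L\big[\S_\si(\G),\S'_\si(\G)\big]$ (and, at the test-function level, the companion isomorphism onto $\mathbb L\big[\S'_\si(\G),\S_\si(\G)\big]$). That this extension genuinely prolongs the $L^2$-level calculus is guaranteed by the pairing \eqref{suntunitar}, which exhibits the extended ${\sf Op}$ as the transpose of the restriction of ${\sf Wig}={\sf Op}^*$ to $\S_\si(\G)\,\overline\otimes\,\mathscr S_\si(\wG)$. The main obstacle is the third step: establishing that the admissible-weight axioms (ii)--(iv) are precisely what is needed to make ${\sf C}$ and ${\sf C}^{-1}$ continuous on the Schwartz-type space, i.e.\ to tame the interplay between the group-theoretic change of variables and the $\si$-weighted seminorms.
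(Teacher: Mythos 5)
Your proposal is correct and follows essentially the same route as the paper: factor ${\sf Op}=\Upsilon\circ{\sf C}\circ({\sf id}\otimes\mathscr P)^{-1}$, handle $\Upsilon$ via the kernel theorem for the Schwartz-type spaces on $\G\times\G$ with the product weight $\si(x)\si(y)$, note that ${\sf id}\overline\otimes\mathscr P$ is an isomorphism by the very definition of the topology on $\mathscr S_\si(\wG)$, and reduce everything by transposition to the continuity of ${\sf C}$ and ${\sf C}^{-1}$ on $\mathcal S_{\Si}(\G\times\G)$, controlled by the admissibility axioms. The only cosmetic difference is that the paper compares the product weight with the specific weight $\tilde\Si$ of \cite[Sect.\,7]{DG} so as to quote their Theorem 7.1 and Corollary 7.4 verbatim, whereas you verify the admissibility axioms for $\si\otimes\si$ directly; your sketch of the estimates making ${\sf C}$ a homeomorphism supplies detail the paper explicitly omits.
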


\begin{proof}
Recalling from Section~\ref{aici} that ${\sf Op}=\Upsilon\circ{\sf C}\circ\big({\sf id}\otimes{\fscr P}^{-1}\big)$ in the $L^2$-type spaces, every ingredient being unitary, let us take into consideration the next diagram, describing the extension:
\begin{equation*}\label{potrat}
\begin{diagram}
\node{\mathscr S'_{\!\si}(\G\times\wG)} \arrow{e,t}{\!({\sf id}\overline\otimes\fscr P)^\dag} \arrow{s,l}{{\sf Op}}\node{\mathcal S'_\Si(\G\times\G)}\arrow{s,r}{{\sf C}}\\ 
\node{\mathbb L\big[\mathcal S_\si(\G),\mathcal S_\si'(\G)\big]} \node{\mathcal S'_\Si(\G\times\G)} \arrow{w,t}{\Upsilon}
\end{diagram}
\end{equation*}

For the product group $\G\times\G$ we use the admissible weight $(x,y)\mapsto\Si(x,y):=\si(x)\si(y)$\,. The choice $\tilde\Si$ in~\cite[Sect.\,7]{DG} is different, but it is easy to check that
$$
\frac{1}{c}\tilde\Si(x,y)\le\Si(x,y)\le\tilde\Si(x,y)^2,\quad\forall\,(x,y)\in\G\times\G
$$
(the first estimate appears at~\cite[(7.1)]{DG} and the second one is obvious), so the seminorms in $\mathcal S_\Si(\G\times\G)$ and $\mathcal S_{\tilde\Si}(\G\times\G)$ are equivalent, the two spaces being the same, with the same topology. Having this in mind, Corollary 7.4 of~\cite{DG}, a version of Schwartz's Kernel Theorem for our spaces, says that the lower horizontal arrow is a topological isomorphism extending~\eqref{intket}.

\smallskip
The upper horizontal arrow is, by definition, the dual (adjoint) of
\begin{equation*}\label{predu}
\mathcal S_\Si(\G\times\G)\cong\mathcal S_\si(\G)\,\overline\otimes\,\mathcal S_\si(\G)\overset{{\sf id}\overline\otimes{\fscr P}}{\longrightarrow}\S_\si(\G)\overline\otimes\mathscr S_{\!\si}(\wG)=:\mathscr S_{\!\si}(\G\times\wG)\,,
\end{equation*}
where the first congruence is~\cite[Th.\,7.1]{DG} (recall that we can switch from $\Si$ to $\tilde\Si$). We use standard properties of the projective tensor products and the very definition of $\mathscr S_{\!\si}(\wG)$ to justify the fact that the restriction ${\sf id}\overline\otimes{\fscr P}$ is an isomorphism.

\smallskip
Finally, to justify the right vertical arrow, by duality, one only has to check that 
\begin{equation*}\label{checknow}
{\sf C}:\S_\Si\big(\G\times\G\big)\to\S_\Si\big(\G\times\G\big)\,,\quad[{\sf C}(K)](x,y):=\Delta(y)^{-1/2}K(x,xy^{-1})
\end{equation*}
is a homeomorphism. This is rather long, but straightforward, using the admissibility of the weight $\si$.
\end{proof}

\begin{Remark}\label{similarius}
{\rm Similar ideas may be put to work to show that $\,{\sf Op}:L^2(\G)\otimes\mathscr B^2(\wG)\rightarrow\mathbb B^2\big[L^2(\G)\big]$ 
restricts to a topological isomorphism $\,{\sf Op}:\mathscr S_{\!\si}\big(\G\times\wG\big)\rightarrow\mathbb L\big[\mathcal S'_\si(\G),\mathcal S_\si(\G)\big]$\,. Setting $\mathbb L(\mathcal M):=\mathbb L(\mathcal M,\mathcal M)$\,,  one also has algebras of global pseudo-differential operators $\mathbb L\big[\mathcal S_\si(\G)\big]$ and $\mathbb L\big[\mathcal S_\si'(\G)\big]$ and a $^*$-algebra of operators $\mathbb L\big[\mathcal S_\si(\G)\big]\cap\mathbb L\big[\mathcal S_\si'(\G)\big]$\,. In general they are not comparable with $\mathbb B^2\big[L^2(\G)\big]$ or $\mathbb B^2\big[L^2(\G)\big]$\,. Behind these operator algebras there are algebras of symbols with composition and involution given by
\begin{equation*}\label{reguli}
A\#B:={\sf Op}^{-1}\big[{\sf Op}(A){\sf Op}(B)\big]\quad{\rm and}\quad A^\#\!:={\sf Op}^{-1}\big[{\sf Op}(A)^*\big]\,.
\end{equation*}
All these may be recast in the framework of Fr\'echet-Hilbert algebras and their associated Moyal algebras~\cite{MP}, but we shall not do this here.
}
\end{Remark}

\section{Kirillov theory and coadjoint orbit quantizations for exponential groups}\label{intkid}

Let us first mention briefly some notions belonging to Kirillov theory~\cite{Ki}. Tacitly, the Lie group will always be type I, connected, simply connected and $N$-dimensional, with Lie algebra $\g$ and dual $\g^\sharp$ of $\g$\,. For the duality, one uses the notation $\<X,\th\>:=\th(X)$\,,\,$(X,\th)\in\g\times\g^\sharp$.
We recall {\it the adjoint and coadjoint actions} of $\G$ on $\mathfrak g$ and $\mathfrak g^\sharp$, respectively, given by
\begin{align*}
{\rm Ad}_x (X) &= \frac d {dt}\Big\vert_{t =0} x\,{\rm e}^{tX} x^{-1}=\frac d {dt}\Big\vert_{t =0} {\rm inn}_x\big({\rm e}^{tX}\big)\,,\\
{\rm Ad}_x^\sharp (\theta) &= \theta \circ{\rm Ad}_{x}^{-1},
\end{align*}
where ${\rm inn}_x(y) = xyx^{-1}$ defines an inner automorphism of $\G$\,. Note that $\exp\circ\,{\sf Ad}_x={\rm inn}_x\circ\exp$\,. For every $\th\in\g^\sharp$, the Lie algebra $\g(\th)$ of the isotropy group $\G(\th):=\big\{x\in\G\mid{\sf Ad}^\sharp_x(\th)=\th\big\}$ can be written as $\g(\th)=\{X\in\g\mid\th\circ{\sf ad}_X=0\}$\,.

\smallskip
It is known that the orbits of the action ${\rm Ad}^\sharp$, called {\it coadjoint orbits}, are invariant symplectic manifolds $\big\{\big(\O,\omega^\O\big)\!\mid\!\O\in\g^\sharp\!/\!_\G\big\}$ (being the symplectic leaves of a Poisson manifold structure on  $\g^\sharp$)\,. So there are invariant measures, {\it the Liouville measures} $\omega_\O$ on the orbits $\O$\,; they are unique up to a positive constant. Let us also set $2n$ for the maximal dimension of the coadjoint orbits, so $N=2n+m$ for some $m\in\N$\,.

\begin{Definition}\label{expobat}
{\rm An exponential group}~\cite{FL} is a Lie group for which the exponential map $\exp:\g\to\G$ is a diffeomorphism (with inverse $\log:\G\to\g$)\,.
\end{Definition} 

The exponential property is stable for Lie subalgebras or quotients.
Connected simply connected nilpotent Lie groups are exponential. Many Frobenius groups (those having at least one open coadjoint orbit) are exponential. On the other hand, exponential groups are solvable, connected and simply connected. 

\smallskip
In the exponential case, we denote by $\kappa:\g^\sharp\!/\!_\G\to\wG$ {\it the Kirillov map}~\cite{Ki}, also admitting the notation $\xi_\O:=\kappa(\O)$\,. We write $\xi\to\O_\xi$ for the inverse map. This type of notation can be extended, setting $\omega^\xi$ for the symplectic form on $\O_\xi$ and $\omega_\xi$ for the corresponding canonical invariant measure.

\smallskip
Some {\it basic facts for exponential groups (or Lie algebras)} are (cf.~\cite{FL,LL,DR}):
\begin{enumerate}
\item[(BF1)]
Kirillov's map $\kappa:\g^\sharp/_{\!\G}\to\wG$ is a homeomorphism. The pushforward $\kappa^{-1}(\wm)$ of the Plancherel measure by $\kappa^{-1}$ is equivalent to the pushforward $\tilde\nu:=q(d\th)$ by the quotient map $q:\g^\sharp\to\g^\sharp\!/_{\!\G}$ of the Lebesgue measure.
\item[(BF2)]
Besides Lebesgue measures $d\th$\, one is also interested in measures $\mu$ on $\g^\sharp$ that are ${\sf Ad}^\sharp$-invariant. These are of the form $d\mu_\Psi(\th)=\Psi(\th)d\th$, for some positive $\Delta^{-1}$-semi-invariant Borel function $\Psi:\g^\sharp\to\R_+$\,. Semi-invariance means that one has almost everywhere
\begin{equation}\label{fronticid}
\Psi\big[{\sf Ad}^\sharp_x(\th)\big]=\Delta(x)^{-1}\Psi(\th)\,.
\end{equation}
It is known that such functions exist, and rational choices are possible.
\item[(BF3)]
Such a measure $\mu_\Psi$ being given, there is a unique basic measure $\nu_\Psi$ on $\g^\sharp\!/_{\!\G}$ over which it decomposes into the fiber measures $\omega_\O$ on the orbits, which formally could be written as $\mu_\Psi=\int_{\g^\sharp\!/_{\!\G}}\omega_\O\,d\nu_\Psi(\O)$\,. The precise meaning is that for every continuous compactly supported function $h:\g^\sharp\to\mathbb C$ one has
\begin{equation}\label{enfin}
\int_{\g^\sharp}h(\th)\Psi(\th)d\th=\int_{\g^\sharp\!/_{\!\G}}\!\Big[\int_\O h(\th)d\omega_\O(\th)\Big]d\nu_\Psi(\O)\,.
\end{equation}
In addition, $\nu_\Psi$ is the image through $\kappa^{-1}$ of one of the Plancherel measures $\wm$ on $\wG$\,.
\item[(BF4)]
It is also proven in~\cite{DR} that, for any coadjoint orbit $\O$\,, the set of functions $\Psi:\O\to\R_+$ satisfying the semi-invariance condition~\eqref{fronticid} is in one-to-one correspondence $\Psi\leftrightarrow{\rm D}_{\Psi,\O}\equiv{\rm D}_{\Psi,\xi_\O}$ with the Duflo-Moore operators associated to the class $\xi_\O\in\wG$ (satisfying~\eqref{eq:invrel}). In a certain sense ${\rm D}_{\Psi,\O}\,d\nu_\Psi(\O)$ is canonical and thus does not depend on $\Psi$\,.
\item[(BF5)]
It is possible to correlate the choices in such a way as to rewrite the Plancherel Inversion Formula (at least) for any $v\in\mathcal D(\G)=C^\infty_{\rm c}(\G)$ in the form
\begin{equation}\label{PIT}
v(\e)=\int_{\g^\sharp\!/_{\!\G}}\!\!{\rm Tr}\Big[{\rm D}^{1/2}_{\Psi,\O}\,\pi_\O(v)\,{\rm D}^{1/2}_{\Psi,\O}\Big]d\nu_\Psi(\O)\,.
\end{equation}
Applying a left translation to $v$ leads to the full {\it orbital form of Plancherel's Inversion Formula}
\begin{equation}\label{FPIT}
v(x)=\int_{\g^\sharp\!/_{\!\G}}\!\!{\rm Tr}\Big[{\rm D}^{1/2}_{\Psi,\O}\,\pi_\O(v)\pi_\O(x)^*\,{\rm D}^{1/2}_{\Psi,\O}\Big]d\nu_\Psi(\O)\,.
\end{equation}
\end{enumerate}

We are going now to recast the global pseudo-differential calculus for exponential groups in the setting of Kirillov's theory. Relying on the facts above, one can replace the space $\mathscr B^2\big(\wG\big):=\int_{\wG}^\oplus\mathbb B^2(\H_\xi)d\wm(\xi)$ with its isomorphic version
\begin{equation}\label{minuna}
\mathscr B^2\big(\g^\sharp\!/_{\!\G}\big):=\int_{\g^\sharp\!/_{\!\G}}^\oplus\!\mathbb B^2(\H_\O)d\nu_\Psi(\O)\,.
\end{equation}
The isomorphism denoted by $\mathscr K$, have its roots in the Kirillov map. {\it The orbital Plancherel transformation}
\begin{equation*}\label{coadrel}
\mathscr P_{\!\rm orb}:L^2(\G)\to\mathscr B^2\big(\g^\sharp\!/_{\!\G}\big)\,,\quad\big[\mathscr P_{\!\rm orb}(v)\big](\O):=\pi_\O(v)D_{\Psi,\O}^{1/2}
\end{equation*}
being unitary, with inverse extending~\eqref{FPIT}, we are lead to {\it the orbital form of the global quantization}
\begin{equation*}\label{cogloqu}
{\sf Op}_{\rm orb}:L^2(\G)\otimes\mathscr B^2\big(\g^\sharp\!/_{\!\G}\big)\to\mathbb B^2\big[L^2(\G)\big]\,,\quad{\sf Op}_{\rm orb}:=\Upsilon\circ{\sf C}\circ({\sf id}\otimes\mathscr P_{\!\rm orb})^{-1},
\end{equation*}
obviously equivalent to ${\sf Op}$ by taking into account the unitary map
\begin{equation*}\label{sandomax}
({\sf id}\otimes\mathscr P_{\rm orb})\circ({\sf id}\otimes\mathscr P)^{-1}\!={\sf id}\otimes\mathscr K:L^2(\G)\otimes\mathscr B^2\big(\wG\big)\to L^2(\G)\otimes\mathscr B^2\big(\g^\sharp\!/_{\!\G}\big)\,.
\end{equation*}
In terms of the Schr\"odinger representation of Remark~\ref{ofi}, there is also the direct definition ${\sf Op}_{\rm orb}={\sf Sch}\circ({\sf id}\otimes\mathscr P_{\!\rm orb})^{-1}$. One gets
\begin{equation}\label{oppad}
\big[{\sf Op}_{\rm orb}(B)u\big](x)= \int_\G \int_{\g^\sharp\!/_{\!\G}}\!\mathrm{Tr}_\xi\!\left[B(x,\O)\,{\rm D}_{\Psi,\O}^{1/2}\,{\pi_\O}(yx^{-1}) \right]\!\Delta(y)^{-\frac 1 2}u(y) \,d\nu_\Psi(\O)\,d\m(y)\,.
\end{equation}

\begin{Remark}\label{revinn}
{\rm Let us set
\begin{equation}\label{unalta}
\Pi_{\rm orb}(x):=\int^\oplus_{\g^\sharp\!/_{\!\G}}\!\pi_\O(x)d\nu_\Psi(\O)\in\int^\oplus_{\g^\sharp\!/_{\!\G}}\mathbb B(\H_\O)d\nu_\Psi(\O)\,,\quad\forall\,x\in\G\,,
\end{equation}
and then $\,{\bf ad}_{\Pi_{\rm orb}(x)}:\mathscr B^2\big(\g^\sharp\!/_{\!\G}\big)\to\mathscr B^2(\g^\sharp\!/_{\!\G})$ by 
\begin{equation*}\label{sofistik}
{\bf ad}_{\Pi_{\rm orb}(x)}\Big(\int^\oplus_{\g^\sharp\!/_{\!\G}}\!T(\O)d\nu_\Psi(\O)\Big):=\int^\oplus_{\g^\sharp\!/_{\!\G}}\!\pi_\O(x)T(\O)\pi_\O(x)^*d\nu_\psi(\O)\,.
\end{equation*}
For $B\in\mathscr B^2\big(\G\times\g^\sharp\!/_{\!\G}\big)$ and $z\in\G$ one has {\it the orbital version of the covariance relation}~\eqref{covariation}
\begin{equation*}\label{covariattion}
{\sf Left}_z\circ{\sf Op}_{\rm orb}(B)\circ{\sf Left}_z^* = {\sf Op}_{\rm orb}\Big[\big({\sf Left}_z\!\otimes{\bf ad}_{\Pi_{\rm orb}(z)}\big)B\Big]\,.
\end{equation*}
}
\end{Remark}

\begin{Remark}\label{vizigot}
{\rm The extension results of Section~\ref{ferfelic} may easily be adapted to this setting, just by replacing the abstract Plancherel map with the orbital one.
}
\end{Remark}

\section{Completely solvable Lie groups and the concrete form of the quantization}\label{csoltkid}

One still needs to replace the orbit space with a nicer version and to describe explicitly the measure $\nu_\Psi$ in this new realization. To do this, for exponential groups, one may rely on~\cite{Cu1,Cu2}, where a parametrization of (large parts of) the orbit space is put into evidence, including an explicit form of the measure $\nu_\Psi$\,.
But the corresponding results for the subclass of completely solvable Lie groups, given in~\cite{CP} and applied to Plancherel theory in~\cite{Cu}, are more manageable. For simplicity, we treat the completely solvable case and dedicate Remark~\ref{vinesiea} to some comments on the changes needed for exponential groups.

\smallskip
We recall that a (connected simply connected) Lie group is called {\it completely solvable} if its (real) $N$-dimensional Lie algebra $\g$ satisfies the following equivalent conditions:
(a) the spectrum (eigenvalues) of all the operators ${\sf ad}_X:\g\to\g$ ($X\in\g$) are all real, (b) there is a sequence of ideals $\{0\}=\g_0\subset\g_1\subset\dots\subset\g_{N-1}\subset\g_N=\g$  with $\dim\g_k=k$ for every $k\in\{1,\dots,N\}$\,. We also recall that ${\rm connected\ simply\ connected\ nilpotent}\Rightarrow{\rm completely\ solvable}\Rightarrow{\rm exponential}\Rightarrow{\rm solvable}$\,
and that all the implications are strict.

\smallskip
There is a complete stratification for completely solvable groups (as well as for the exponential ones), but ``the main fine layer'' will be enough for us. The starting point is given by the inclusions
\begin{equation}\label{scazator}
\g^\sharp\supset\g^\sharp_\circ\supset\Omega=\bigsqcup_{\epsilon\in E}\Omega_\epsilon\,.
\end{equation}
We do not describe the open subset $\g^\sharp_\circ$\,, which will not be directly relevant. We mentioned it only to make the connection with Pukansky's rough stratification, valid for every solvable group.
Let us give a description of the ingredients of~\eqref{scazator} and the role they play. More precise results, also containing parametrizations of the coadjoint orbits and computations of the Liouville measures, are in~\cite[Th.\,1.2\,,\,Lemma\,1.3]{Cu}. The constructions depend on a choice of a Jordan-H\"older basis in $\g$\,.

\smallskip
{\it A Zariski-open subset} of the vector space $W$ (applied to $W=\g^\sharp$\,, for instance) is the union of a family  of sets of the form $\{\th\in W\mid P_\alpha(\th)\ne 0\}$\,, where each $P_\alpha:W\to\R$ is a polynomial. So Zariski-open implies open. If it is not void, a Zariski-open subset is dense and has Lebesgue-negligible complement.

\smallskip
The subset $\Omega$ is non-void and Zariski-open (so it is dense in $\g^\sharp$ and has full Lebesgue measure) and ${\sf Ad}^\sharp$-invariant and all the coadjoint orbits contained in $\Omega$ have maximal dimension $2n$ (but this is also true for the larger set $\g^\sharp_\circ$)\,. It possesses an algebraic subset $\Si$\,, homeomorphic to $\Omega/_{\!\G}$\,, which is a (topological) cross-sections for all the orbits in $\Omega$\,. Let us denote the homeomorphism by $h:\Omega/_{\!\G}\to\Si$\,.

\smallskip
Since there is no nice description of $\Si$\,, the set $\Omega$ is further decomposed as a finite union over $\epsilon\in E$ of disjoint open subsets $\Omega_\epsilon$\,, with $\Si_\epsilon:=\Si\cap\Omega_\epsilon$ easier to describe. One has $\#(E)\le 2^n$. For every $\epsilon\in E$ there is a direct sum decomposition $\g^\sharp=V_\epsilon\oplus V^\epsilon$ with $\dim V_\epsilon\ge2n$ and $\dim V_\epsilon\le m:=N-2n$\,. One also finds a rational function 
\begin{equation}\label{rationalone}
p_\epsilon:\Lambda_\epsilon\subset V_\epsilon\to V^\epsilon,\quad{\rm Gr}(p_\epsilon)=\Si_\epsilon
\end{equation}
defined on a Zariski-open subset $\Lambda_\epsilon$\,; the role of its graph is indicated in~\eqref{rationalone}. Then the canonical projection
\begin{equation*}\label{teintereseaza}
{\rm pr}_\epsilon:\g^\sharp=V_\epsilon\oplus V^\epsilon\to V_\epsilon
\end{equation*}
defines by restriction a rational diffeomorphism $\Si_\epsilon\cong\Lambda_\epsilon$\,. So we get a (rational diffeomorphic) parametrization of the useful cross-section $\Si_\epsilon$ (already a homeomorphic version of the slice $\Omega_\epsilon/_{\!\G}$ of the orbit space). Finally, by composing the suitable restrictions, one gets the homeomorphism
${\sf h}_\epsilon:={\rm pr}_\epsilon\circ h_\epsilon:\Omega_\epsilon/_{\!\G}\to\Lambda_\epsilon$\,,

\begin{equation*}\label{nochegrama}
\begin{diagram}
\node{\Omega_\epsilon/_{\!\G}} \arrow{e,t}{{\sf h}_\epsilon}\arrow{s,l}{h_\epsilon}\node{\Lambda_\epsilon\subset V_\epsilon}\\ 
 \node{\Si_\epsilon}\arrow{ne,r}{{\rm pr}_\epsilon}
\end{diagram}
\end{equation*}
serving among others to endow $\Omega_\epsilon/_{\!\G}$ with the structure of a differentiable manifold.

\smallskip
The remaining part of the job is to transport to the Euclidean open sets $\Lambda_\epsilon$\,, with their Lebesgue measures $d\lambda$\,,  as much relevant theory as we can.
We recall from Section~\ref{intkid} that to any rational semi-invariant function $\Psi:\g^\sharp\to\mathbb R$ one uniquely associates:
\begin{itemize}
\item
a family $\big\{{\rm D}_{\Psi,\O}\mid\O\in\g^\sharp/_{\!\G}\big\}$ of Duflo-Moore operators labeled by the coadjoint orbits,
\item
a measure $\nu_{\Psi}$ on the orbit space, bringing its contribution to the decomposition~\eqref{enfin} and to the Plancherel-type inversion formula~\eqref{FPIT}.
\end{itemize} 
The main result of~\cite{Cu1} tells us that, for a certain polynomial $P_\epsilon:\g^\sharp\to\mathbb R$ (proportional to a Pfaffian acting on elements of the Jorden-H\"older basis), the pushforward by ${\sf h}_\epsilon$ of the restriction $\nu_{\Psi,\epsilon}$ to $\Omega_\epsilon/_{\!\G}$ satisfies
\begin{equation}\label{masura}
\big[{\sf h}_\epsilon(\nu_{\Psi,\epsilon})\big](d\lambda)=:d\Gamma_{\Psi,\epsilon}(\lambda)=\gamma_{\Psi,\epsilon}(\lambda)d\lambda\,,\quad{\rm with}\quad \gamma_{\Psi,\epsilon}(\lambda):=\big\vert\Psi\big[{\rm pr}_\epsilon^{-1}(\lambda)\big]\big\vert\,\big\vert P_\epsilon\big[{\rm pr}_\epsilon^{-1}(\lambda)\big]\big\vert\,.
\end{equation}
Of course one has $\gamma_{\Psi,\epsilon}\circ{\rm pr}_\epsilon=\big\vert\Psi P_\epsilon\big\vert$ (absolute value of a rational function), which is useful for expressing the ``shorter'' pushforward $h_\epsilon(\nu_{\Psi,\epsilon})$\,, but then using ${\rm pr}_\epsilon^{-1}(d\lambda)$ is less attractive.

\begin{Remark}\label{scuarpatrate}
{\rm If $\G$ has irreducible representations that are square integrable, than each refined layer $\Omega_\epsilon$ is a single coadjoint orbit, so $\Lambda_\epsilon$ is a (the) point in a $0$-dimensional vector space $V_\epsilon$ and it is of course an atom.
}
\end{Remark}

So now, specifying a ``generic'' orbit $\O\in\Omega/_{\!\G}$ reduces to specifying some $\epsilon\in E$ and some element $\lambda\in\Lambda_\epsilon\subset V_\epsilon$\,. As a consequence of the above,~\eqref{PIT} reads now
\begin{equation*}\label{PITC}
v(\e)=\sum_{\epsilon\in E}\,\int_{\Lambda_\epsilon}\!{\rm Tr}\Big[{\rm D}^{1/2}_{\Psi,\lambda,\epsilon}\,\pi_{\lambda,\epsilon}(v)\,{\rm D}^{1/2}_{\Psi,\lambda,\epsilon}\Big]\big\vert\Psi\big[{\rm pr}_\epsilon^{-1}(\lambda)\big]\big\vert\,\big\vert P_\epsilon\big[{\rm pr}_\epsilon^{-1}(\lambda)\big]\big\vert\,d\lambda\,,
\end{equation*}
only involving integration on a finite number of open subsets of the vector spaces $V_\epsilon$\,. By a left translation we arrive at {\it the concrete Plancherel inversion formula}
\begin{equation}\label{PITCo}
v(x)=\sum_{\epsilon\in E}\,\int_{\Lambda_\epsilon}\!{\rm Tr}\Big[{\rm D}^{1/2}_{\Psi,\lambda,\epsilon}\,\pi_{\lambda,\epsilon}(v)\pi_{\lambda,\epsilon}(x)^*\,{\rm D}^{1/2}_{\Psi,\lambda,\epsilon}\Big]\big\vert\Psi\big[{\rm pr}_\epsilon^{-1}(\lambda)\big]\big\vert\,\big\vert P_\epsilon\big[{\rm pr}_\epsilon^{-1}(\lambda)\big]\big\vert\,d\lambda\,.
\end{equation}

\begin{Corollary}\label{ladracu}
Let us also consider the open subsets $\big\{\wG_\epsilon:=\kappa\big(\Omega_\epsilon/_{\!\G}\big)\mid\epsilon\in E\big\}$ of the unitary dual $\wG$\,. Then $\wG_\Omega:=\bigsqcup_{\epsilon\in E}\wG_\epsilon$ is an (open) dense subset of $\,\wG$ with Plancherel-negligible complement. In addition each $\wG_\epsilon\cong\Omega_\epsilon/_{\!\G}\cong\Lambda_\epsilon$ is endowed with a structure of a differentiable manifold.
\end{Corollary}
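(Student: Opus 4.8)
The plan is to transport everything through the Kirillov homeomorphism $\kappa$ and then read off the measure statement from the equivalence recorded in (BF1). First I would note that, by its construction in \cite{Cu,CP}, each refined layer $\Omega_\epsilon$ is an ${\sf Ad}^\sharp$-invariant open subset of $\g^\sharp$; consequently $q^{-1}\big(\Omega_\epsilon/_{\!\G}\big)=\Omega_\epsilon$ is open and saturated, so $\Omega_\epsilon/_{\!\G}=q(\Omega_\epsilon)$ is open in the quotient topology of $\g^\sharp/_{\!\G}$. Since $\kappa$ is a homeomorphism by (BF1), each $\wG_\epsilon=\kappa\big(\Omega_\epsilon/_{\!\G}\big)$ is open in $\wG$, and because the $\Omega_\epsilon$ are pairwise disjoint and $\kappa$ is injective, the union $\wG_\Omega=\bigsqcup_{\epsilon\in E}\wG_\epsilon=\kappa\big(\Omega/_{\!\G}\big)$ is a genuine disjoint union of open sets, hence open.

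Next I would establish density together with Plancherel-negligibility of the complement. As recalled just before the statement, the non-void Zariski-open set $\Omega$ is dense in $\g^\sharp$ and its complement $\g^\sharp\setminus\Omega$ is Lebesgue-negligible. Density passes to the quotient, since $q$ is a continuous surjection: $\overline{q(\Omega)}\supseteq q\big(\overline{\Omega}\big)=q(\g^\sharp)=\g^\sharp/_{\!\G}$, so $\Omega/_{\!\G}$ is dense, and applying the homeomorphism $\kappa$ shows $\wG_\Omega$ is dense in $\wG$. For negligibility, observe that $\Omega$ is saturated, hence so is its complement, which gives $q^{-1}\big(q(\g^\sharp\setminus\Omega)\big)=\g^\sharp\setminus\Omega$. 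By definition of the pushforward $\tilde\nu=q(d\th)$, the value $\tilde\nu\big(q(\g^\sharp\setminus\Omega)\big)$ equals the Lebesgue measure of $\g^\sharp\setminus\Omega$, which is $0$. Since (BF1) asserts $\kappa^{-1}(\wm)$ is equivalent to $\tilde\nu$, the same set is $\kappa^{-1}(\wm)$-null, and transporting back yields $\wm\big(\wG\setminus\wG_\Omega\big)=\wm\big(\kappa\big(q(\g^\sharp\setminus\Omega)\big)\big)=0$, as required.

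For the smooth structure I would simply transport charts. The homeomorphism ${\sf h}_\epsilon={\rm pr}_\epsilon\circ h_\epsilon:\Omega_\epsilon/_{\!\G}\to\Lambda_\epsilon$ identifies $\Omega_\epsilon/_{\!\G}$ with the Zariski-open --- in particular open --- subset $\Lambda_\epsilon$ of the vector space $V_\epsilon$, which already endows $\Omega_\epsilon/_{\!\G}$ with a differentiable manifold structure via a single global chart, exactly as noted in the text. Composing with the homeomorphism $\kappa|_{\Omega_\epsilon/_{\!\G}}:\Omega_\epsilon/_{\!\G}\to\wG_\epsilon$, I would declare $\kappa\circ{\sf h}_\epsilon^{-1}:\Lambda_\epsilon\to\wG_\epsilon$ to be a global chart on $\wG_\epsilon$; this produces the chain of diffeomorphisms $\wG_\epsilon\cong\Omega_\epsilon/_{\!\G}\cong\Lambda_\epsilon$ claimed in the statement.

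The verifications above are immediate once the structural input is in place. The only point that genuinely requires care --- and the place where one must lean on the stratification results of \cite{Cu,CP} rather than on soft topology --- is the claim that each \emph{individual} $\Omega_\epsilon$, and not merely their union $\Omega$, is open and ${\sf Ad}^\sharp$-saturated. This is precisely what guarantees simultaneously the openness of the $\wG_\epsilon$ and the clean identity $q^{-1}\big(q(\g^\sharp\setminus\Omega)\big)=\g^\sharp\setminus\Omega$ on which the measure computation rests; everything else is transport of structure through the homeomorphisms $\kappa$ and ${\sf h}_\epsilon$ and a one-line application of (BF1).
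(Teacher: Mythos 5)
Your argument is correct and coincides with the one the paper leaves implicit: the Corollary is stated without proof precisely because it is the transport, through the Kirillov homeomorphism of (BF1) and the homeomorphisms ${\sf h}_\epsilon$, of the recalled facts that $\Omega$ is Zariski-open, dense, of full Lebesgue measure, ${\sf Ad}^\sharp$-invariant, and decomposed into the finitely many disjoint open invariant layers $\Omega_\epsilon$, together with the equivalence of $\kappa^{-1}(\wm)$ with the pushforward of Lebesgue measure. Your observation that the only substantive input is the openness and saturation of each individual $\Omega_\epsilon$, guaranteed by the stratification results of Currey--Penney, is exactly the right point to flag.
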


These facts suggest considering the Hilbert space
\begin{equation}\label{hilbspac}
\mathscr B^2_{\rm con}(\Lambda):=\bigoplus_{\epsilon\in E}\,\int^\oplus_{\Lambda_\epsilon}\mathbb B^2\big(\H_{\O_\lambda}\big)d\Gamma_{\Psi,\epsilon}(\lambda)\cong\bigoplus_{\epsilon\in E}\,\int^\oplus_{\Omega_\epsilon/_{\!\G}}\!\mathbb B^2\big(\H_{\O}\big)d\nu_{\Psi,\epsilon}(\O)\,,
\end{equation}
isomorphic to 
\begin{equation*}\label{crichet}
\mathscr B^2\big(\wG\big):=\int_{\wG}^\oplus\mathbb B^2(\H_\xi)d\wm(\xi)\cong\bigoplus_{\epsilon\in E}\,\int^\oplus_{\wG_\epsilon}\!\mathbb B^2\big(\H_{\xi}\big)d\wm_{\epsilon}(\xi)\,,
\end{equation*}
where $\wm_\epsilon$ is the restriction of the Plancherel measure to the open subset $\wG_\epsilon:=\kappa\big(\Omega_\epsilon/_{\!\G}\big)$\,. 

\smallskip
{\it The concrete Plancherel transformation}
\begin{equation*}\label{coadrell}
\mathscr P_{\!\rm con}:L^2(\G)\to\mathscr B^2_{\rm con}\big(\Lambda)\,,\quad\big[\mathscr P_{\!\rm con}(v)\big](\lambda):=\pi_{\O_\lambda}(v){\rm D}_{\Psi,\O_\lambda}^{1/2}\equiv\pi_{\lambda}(v){\rm D}_{\Psi,\lambda}^{1/2}
\end{equation*}
being unitary, with inverse indicated by~\eqref{PITCo}, we are lead to {\it the concrete form of the global quantization}
\begin{equation*}\label{cogloqur}
{\sf Op}_{\rm con}:L^2(\G)\otimes\mathscr B_{\rm con}^2(\Lambda)\to\mathbb B^2\big[L^2(\G)\big]\,,\quad{\sf Op}_{\rm con}:=\Upsilon\circ{\sf C}\circ({\sf id}\otimes\mathscr P_{\!\rm con})^{-1},
\end{equation*}
obviously equivalent to~\eqref{finop} by taking into account the unitary map
\begin{equation*}\label{sandomarx}
({\sf id}\otimes\mathscr P_{\rm con})\circ({\sf id}\otimes\mathscr P)^{-1}:L^2(\G)\otimes\mathscr B^2\big(\wG\big)\to L^2(\G)\otimes\mathscr B^2_{\rm con}\big(\Lambda)\,.
\end{equation*}
In terms of the Schr\"odinger representation there is also the direct definition
\begin{equation*}\label{cogloqusch}
{\sf Op}_{\rm con}={\sf Sch}\circ({\sf id}\otimes\mathscr P_{\!\rm con})^{-1}.
\end{equation*}
One gets
\begin{equation}\label{oppadd}
\big[{\sf Op}_{\rm con}(\mathcal B)u\big](x)=\sum_{\epsilon\in E}\,\int_\G \int_{\Lambda_\epsilon}\!\mathrm{Tr}_\xi\!\left[\mathcal B(x,\lambda)\,{\rm D}_{\Psi,\lambda}^{1/2}\,{\pi_\lambda}(yx^{-1}) \right]\!\Delta(y)^{-\frac 1 2}u(y) \,\gamma_{\Psi,\epsilon}(\lambda)d\lambda\,d\m(y)\,.
\end{equation}

The extension results of Section~\ref{ferfelic} may easily be adapted to this setting, by replacing the abstract Plancherel map with the concrete one.

\begin{Remark}\label{rewin}
{\rm There is an obvious adaptation of the covariance results Proposition~\ref{avenir} or Remark~\ref{revinn} to the concrete framework. Setting
\begin{equation*}\label{unaltai}
\Pi_{\rm con}(x):=\bigoplus_{\epsilon\in E}\,\int^\oplus_{\Lambda_\epsilon}\!\pi_\lambda(x)\gamma_{\Psi,\epsilon}(\lambda)d\lambda\in\bigoplus_{\epsilon\in E}\,\int^\oplus_{\Lambda_\epsilon}\mathbb B(\H_\lambda)\gamma_{\Psi,\epsilon}(\lambda)d\lambda\,,\quad\forall\,x\in\G\,,
\end{equation*}
in terms of the corresponding $\,{\bf ad}_{\Pi_{\rm con}(x)}:\mathscr B^2_{\rm con}(\Lambda)\to\mathscr B^2_{\rm con}(\Lambda)$\,, one has {\it the concrete covariance relation} for $\mathcal B\in\mathscr B^2(\G)\otimes\mathscr B^2_{\rm con}(\Lambda)\equiv\mathscr B^2(\G\times\Lambda)$ and $z\in\G$  
\begin{equation*}\label{covariadtion}
{\sf Left}_z\circ{\sf Op}_{\rm con}(\mathcal B)\circ{\sf Left}_z^* = {\sf Op}_{\rm con}\Big[\big({\sf Left}_z\!\otimes{\bf ad}_{\Pi_{\rm con}(z)}\big)\mathcal B\Big]\,.
\end{equation*}
}
\end{Remark}

\begin{Example}\label{liucil}
{\rm One of the simplest situations occurs when $\G$ is a connected simply connected nilpotent Lie group for which there are flat coadjoint orbits~\cite{CG,MW} (equivalently, if there are square integrable modulo the center irreducible representations). Of course, such groups are unimodular, so already the modular function, the Duflo-Moore operators and the function $\Psi$ are trivial. In this case, one can take $E$ to be a singleton and $\Lambda$ is a Zariski open subset of the dual $\mathfrak z^\sharp$ of the center $\mathfrak z$ of the Lie algebra $\g$\,. Specifically,
$$
\Lambda=\big\{\th\in\mathfrak z^\sharp\mid {\rm Pf}(\th)\ne 0\big\}\,,
$$
where ${\rm Pf}:\mathfrak z\to\R$ is a certain explicit Pfaffian, a coadjoint-invariant homogeneous polynomial. The density of the measure $\Gamma$ on $\Lambda\subset\mathfrak g^\sharp$ with respect to the Lebesgue measure of the vector space $\mathfrak g^\sharp$ is proportional to $|{\rm Pf}(\cdot)|$\,. This case has been considered from several points of view in~\cite{MR1}, so we are not going to give further details.
}
\end{Example}

\begin{Remark}\label{vinesiea}
{\rm We describe briefly some complications occurring for exponential groups that are not completely solvable; the details can be found in~\cite{Cu1,Cu2}:
\begin{itemize}
\item
This time one must start with a basis of the complexification $\g_{\mathbb C}$ of the Lie algebra $\g$\,, satisfying some precise conditions.
\item
A convenient big open invariant subset $\Omega\subset\g^\sharp$ is a bundle over a cross-section $\Si$\,. 
\item
This cross-section $\Si$\,, still an algebraic manifold, has a more complicated nature. Even after suitable localizations, it is not modeled by Zariski-open subsets of some vector subspace $V$. Instead of $V$ one uses Cartesian products of pieces that are of one of the forms $\R,\mathbb C,\{-1,1\}$ or $\T$\,. The natural ``Lebesgue'' measures of these pieces contribute now to the part $d\lambda$ in the concrete measure~\eqref{masura}.
\item
The density in~\eqref{masura} needs a correction by a factor involving some of the roots of the coadjoint action.
\item
Some functions needed for parametrization, instead of rational, are now only real analytic.
\item
The Duflo-Moore operators take a precise form (multiplication by the modular function), if the Vergne polarization is chosen for the realizations of the irreducible representations. 
\end{itemize}
}
\end{Remark}

\section{The connected simply connected solvable Bianchi groups}\label{cracit}

Although the results of the previous section are explicit (if one follows closely the details in~\cite{Cu,Cu1}), they are quite complicated. In certain particular cases it might be easier to rely on some direct treatment, as~\cite{AV}.

\smallskip
We recall that the $3$-dimensional real Lie algebras have been long ago classified modulo isomorphy by Bianchi in 9 classes $\g_{\rm W}$ with ${\rm W=I,I\!I,\dots,I\!X}$\,. The classes $\g_{\rm V\!I\!I\!I}$ and $\g_{\rm I\!X}$ are simple and they do not concern us here. The remaining ones are all solvable, most of them exponential actually (see below). The corresponding connected simply connected solvable Lie groups $\big\{\G_{\rm W}\mid {\rm W=I,I\!I,\dots,V\!I\!I}\big\}$ are the topic of this section. The classes are indicated specifying a special member easy to identify.

\smallskip
The first Bianchi group is the vector space $\G_{\rm I}=\R^3$, and it is clear that the global quantization in this case is just the usual Kohn-Nirenberg quantization~\cite{Fo,Sh}, the initial motivation of all our approach. We do not review it here.

\smallskip
The second one is the Heisenberg group $\G_{\rm I\!I}=\mathbb H_3$ (nilpotent and thus unimodular). Its global quantization has been repetedly considered before, so we will not treat it. See~\cite[Sect.\,6]{FR} for a very detailed approach (including many references) as well as Example~\ref{liucil}.

\smallskip
The third one is $\G_{\rm I\!I\!I}={\sf Aff}\times\R$\,, the direct product between the affine group ${\sf Aff}$ (also called ``$ax+b$'') of the real line and the one-dimensional vector space $\R$\,. We treated ${\sf Aff}$ in~\cite{MS}, see also~\cite{Fan,GGBV}, and for the direct product one can use Remark~\ref{likely}. For completeness, we are now going to indicate briefly the form of the global pseudo-differential operators.

\smallskip
Thus $\G_{\rm I\!I\!I}={\sf Aff}\times\R=\R_+\times\R\times\R$ with composition law
$$
(a,b;c)\cdot\big(a',b';c'\big) = \big(aa',ab'+b;c+c'\big)\,.
$$
The left Haar measure is $\left\vert a \right\vert^{\scriptscriptstyle -2}\!dadbdc$\,, and the right Haar measure is $\left\vert a\right\vert^{-1}\!dadbdc$\,, hence the modular function is given by
$\Delta(a,b;c) = \left\vert a \right\vert^{\scriptscriptstyle -1}\!.$ 

The unitary dual $\widehat{\sf Aff}$ consists only of two points with strictly positive Plancherel measure and a null set of one-dimensional representations~\cite[Sect. 6.7]{Fo}, that we can neglect. The two main representations are square integrable and act on $\H_\pm=L^2(\mathbb R_\pm)$ by
$$
[\pi_{\pm}(a,b)\varphi](s) = \left\vert a \right\vert^{1/2}\!e^{2\pi i bs} \varphi(a s)\,.
$$
The Duflo-Moore operator are $({\rm D}_\pm\varphi)(s) = \left\vert s \right\vert\varphi(s)\,$. Correspondingly, the dual $\mathfrak{aff}^\sharp$ of the Lie algebra, identified with $\R^2$, has the two open coadjoint orbits $\O_{\pm}:=\{(\alpha,\beta)\in\R^2\mid \pm\alpha>0\}$ (half planes), while all $(\alpha,0)$ are fixed points. Remark~\ref{scuarpatrate} is relevant here. The contribution  of ${\sf Aff}$ to the space of symbols
is $L^2({\sf Aff})\otimes\big[\mathbb B^2(\H_-)\oplus\mathbb B^2(\H_+)\big]$ (see~\eqref{hilbspac}; here $E=\{\pm\}$).

\smallskip
Taking into consideration the dual $\widehat\R$ of $\R$\,, previous formulae and Remark~\ref{likely}, the quantization reads
$$
\begin{aligned}
&[{\sf Op}(\mathcal B)u](a,b;c)\\
&=\sum_{\pm}\int_0^\infty\!\!\!\!\iiint_{\mathbb R^3} \frac{e^{-ic\zeta}}{\left\vert a'\right\vert^{3/2} }\,\mathrm{Tr}_\pm\!\left(\mathcal B_\pm(a,b;c\!\mid\!\zeta){\rm D}_\pm^{\frac 1 2}\pi_\pm\!\left( b - \frac a {a'} b',\frac a {a'}\right)\!^*\right) u(a',b';c') \, da'db'dc'd\zeta\\
&=\sum_{\pm}\int_0^\infty\!\!\!\!\iint_{\mathbb R^2} \mathrm{Tr}_\pm\!\left(\tilde{\mathcal B}_\pm(a,b;c\!\mid\!c){\rm D}_\pm^{\frac 1 2}\pi_\pm\!\left( b - \frac a {a'} b',\frac a {a'}\right)\!^*\right) u(a',b';c') \, \frac{da'db'dc'}{\left\vert a'\right\vert^{3/2}}\,.
\end{aligned}
$$
By $\ \tilde{}\ $ we indicated a (suitably normalized) partial Euclidean Fourier transformation. Traces are computed with respect to the Hilbert spaces $\H_{\pm}$ and the symbol $\,\mathcal B\equiv(\mathcal B _-,\mathcal B_+)$ can be seen as an element of 
$$
\Big(L^2({\sf Aff})\otimes\big[\mathbb B^2(\H_-)\oplus\mathbb B^2(\H_+)\big]\Big)\otimes\Big(L^2(\R)\otimes L^2(\widehat\R)\Big)\cong L^2\big(\G\times\widehat\R;\mathbb B^2(\H_-)\big)\oplus L^2\big(\G\times\widehat\R;\mathbb B^2(\H_+)\big)\,.
$$ 

We treat now the {\it three-dimensional Lie algebras with two-dimensional derived algebra}.
Let us denote by $\g'$ the vector space of the Lie algebra $\g$ generated by commutators (actually it is an ideal). One has 
$$
\dim\g'_{\rm I}=0\,,\quad\dim\g'_{\rm I\!I}=\dim\g'_{\rm I\!I\!I}=1\,,\quad\dim\g'_{\rm V\!I\!I\!I}=\dim\g'_{\rm I\!X}=3\,.
$$
For the remaining Bianchi Lie algebras $\g_{\rm W}$ one has $\dim\g'_{\rm W}=2$ and this leads to a common structure. Each one is (isomorphic with) the semi-direct product $\g_{\rm W}=\R^2\!\rtimes_{M_{\rm W}}\!\R$ of a two-dimensional Abelian Lie algebra by $\R$\,, defined by a linear automorphism $M_{\rm W}:\R^2\to\R^2$. Let us skip the index ${\rm W}$ for a while and use notations as $\big((\alpha,\beta);\gamma\big)\equiv(\alpha,\beta;\gamma)$ for elements of $\g$\,. Then the Lie bracket is
\begin{equation}\label{taraboanta}
[(\alpha,\beta;\gamma),(\alpha',\beta';\gamma')]=\big(\gamma M(\alpha',\beta')-\gamma' M(\alpha,\beta);0\big)\,.
\end{equation}
Consequently, the corresponding Lie groups have the form $\G=\R^2\!\rtimes_{\mathbf M}\!\R$\,, which is a semi-direct product defined by the action of $\R$ by automorphisms of the vector group $\R^2$ given by
\begin{equation}
\mathbf M:\R\to{\sf Aut}(\R^2)\,,\quad \mathbf M_c:=e^{cM}.
\end{equation}
The composition law is
\begin{equation*}\label{mydoor}
(a,b;c)(a',b';c')=\big((a,b)+\mathbf M_c(a',b');c+c'\big)\,.
\end{equation*}
Then a left Haar measure of $\G$ is $d\m(a,b;c)=\det\!\big(e^{-cM}\big) dadbdc$ and the modular function is $\Delta(a,b;c)=\det\!\big(e^{-cM}\big)$ (thus the Lebesgue measure $dadbdc$ is a right Haar measure).

\smallskip
Currey's parametrization (only depending on a choice of a suitable Jordan-H\"older basis) is not so complicated in three dimensions. But we found convenient to rely on results from~\cite{AV}, in which the authors found easier to apply directly Mackey's induced representation theory for semidirect products than the parametrization methods of Currey. Different (but equivalent) descriptions rely on different choices for topological cross-sections in the orbit space. The emphasis in~\cite{AV} is on the orbit structure of the contragredient action $\mathbf M^\perp$ of $\R$ on the dual $\widehat{\R^2}\equiv\R^2$, {\it and this will allow below a unified treatment}.  But in cases ${\rm V\!I}$ and ${\rm V\!I\!I}$ Currey's cross-sections could be considered simpler.

\smallskip
The main conclusion is that the unitary dual $\wG$ can be identified with the disjoint union of a family of fixed points, labeled by $\R$\,, and  not contributing to the Plancherel measure, and a  quotient $\R^2_\bu/\mathbf M^\perp$\,, where $\R^2_\bu:=\R^2\setminus\{0\}$\,. The generic classes of irreducible representations $\xi$ are actually labeled by a parameter $\sigma$ belonging to a cross-section $\Si\subset\R^2_\bu$\,. The ($1$-dimensional) algebraic submanifolds $\Si$\,, composed of one or several connected components, and its relevant measures $d\rho(\si)=\rho(\si)d\si$, will be indicated below, via a parametrization, case by case. The generic classes of irreducible representations can all be realized on the Hilbert space $\H_\si=L^2(\R)$ as
\begin{equation*}\label{realized}
\big[\pi_\sigma(a,b;c)\varphi\big](t):=e^{i\sigma\cdot e^{-tM}\!(a,b)}\varphi(t-c)\,,
\end{equation*}
while the Duflo-Moore operators are multiplication operators by the modular function $\Delta$ (only depending on the last variable $c\in\R$)\,. 
Making use of this limited information, one writes the global quantization as
\begin{equation}\label{operatoruku}
\begin{aligned}
\ [{\sf Op}(\mathcal B)u](a,b;c)=\int_{\R^3}\!\int_{\Si} \ &\mathrm{Tr}\left(\mathcal B(a,b;c\!\mid\!\si){\rm D}_\si^{\frac 1 2}{\pi_\si}\big((a',b')-e^{(c'-c)M}(a,b);c'-c\big) \right)\\
&u(a',b';c') \det\!\big(e^{-c'M}\big)^{1/2}da'db'dc'\rho(\si)d\si.
\end{aligned}
\end{equation}

To be more specific, one has to invoke the Bianchi classification~\cite{EW,Ja} and results describing the parametrization spaces from~\cite{AV}.
It can be shown that two such semi-direct product Lie algebras given respectively by $M_1$ and $M_2$ are isomorphic  if and only if the endomorphisms $M_1$ and $M_2$ are similar up to a scaling. Combining this with the real form of Jordan's canonical decomposition, one gets the cases
\begin{equation*}\label{matrixM}
M_{\rm I\!V}=
\begin{bmatrix}
 1       & 0 \\
 1      & 1
\end{bmatrix}
,\ 
M_{\rm V}=
\begin{bmatrix}
1  & 0 \\
0 & 1 
\end{bmatrix}
,\ 
M^{(q)}_{\rm V\!I}=
\begin{bmatrix}
1  & 0 \\
0 & -q 
\end{bmatrix}
,\ 
M^{(p)}_{\rm V\!I\!I}=
\begin{bmatrix}
p  & -1 \\
1 & p 
\end{bmatrix}
,
\end{equation*}
where $q\ne 0,-1$ and $p\ge 0$\,, with corresponding group actions
\begin{equation*}\label{matrixbfM}
e^{tM_{\rm I\!V}}\!=\!
\begin{bmatrix}
 e^t       & 0 \\
 te^t      & e^t
\end{bmatrix}
,\ 
e^{tM_{\rm V}}\!=\!
\begin{bmatrix}
e^t  & 0 \\
0 & e^t 
\end{bmatrix}
,\ 
e^{tM^{(q)}_{\rm V\!I}}\!=\!
\begin{bmatrix}
e^t  & 0 \\
0 & e^{-qt} 
\end{bmatrix}
,\ 
e^{t M^{(p)}_{\rm V\!I\!I}}\!=\!
\begin{bmatrix}
e^{pt}\cos t  & -e^{pt}\sin t \\
e^{pt}\sin t & e^{pt}\cos t 
\end{bmatrix}
.
\end{equation*}
All the groups are completely solvable, with the exception of $\G^{(p)}_{\rm V\!I\!I}$ (for $p>0$ called {\it the Gr\'elaud group} in~\cite{FL}). In fact $\G^{(0)}_{\rm V\!I\!I}$ is not even exponential. The single unimodular ones correspond to $M^{(1)}_{\rm V\!I}$ and $M^{(0)}_{\rm V\!I\!I}$\,. In each case there is a parametrization $\Lambda$ of $\Si$ and a measure $d\Gamma(\lambda)=\gamma(\lambda)d\lambda$\,, that we now recall. By $d\lambda$ we denote the Lebesgue measure on the indicated segments or, in one case, on the circle. The densities $\gamma$ are indicated up to a strictly positive constant.
\begin{enumerate}
\item[({\rm I\!V})]
$\Lambda=(-\infty,0)\sqcup(0,\infty)$\,, \,$\gamma(\lambda)=1+|\lambda|$\,,
\item[({\rm V})]
$\Lambda=\T:=\R/\Z$\,, \,$\gamma(\lambda)=1$\,,
\item[({\rm V\!I})]
$\Lambda=(0,\infty)\times\{1,2,3,4\}$\,, \,$\gamma_q(\lambda)=q^{\lambda({\rm mod} 2)}$\,,
\item[({\rm V\!I\!I})]
$\Lambda=(e^{-p\pi},1]\sqcup[1,e^{p\pi})$\,, \,$\gamma(\lambda)=|\lambda|$\ \,(here $p>0$)\,.
\end{enumerate}
Details, including more formulae and a picture of the orbits and of the cross-sections $\Si$\,, can be found in~\cite{AV}; they help to understand the splittings. The results are roughly compatible with Currey's theory. We recall however that both Currey's and the parametrizations from~\cite{AV} depend on choices and can be subject to modifications.

\smallskip
The parametrizations and the measures should be inserted into~\eqref{operatoruku} to replace the cross-section $\Si$ and the measure $\rho(\si)d\si$. For example, for the case ({\rm V}) one gets
\begin{equation}\label{operatoraku}
\begin{aligned}
\ [{\sf Op}(\mathcal B)u](a,b;c)=\int_{\R^3}\!\int_{\T} \ &\mathrm{Tr}\left(\mathcal B(a,b;c\!\mid\!\si){\rm D}_\si^{\frac 1 2}{\pi_\si}\big((a',b')-e^{(c'-c)M}(a,b);c'-c\big) \right)\\
&u(a',b';c') e^{-c'}da'db'dc'd\lambda\,.
\end{aligned}
\end{equation}

{\bf Acknowledgments.} M. Sandoval has been supported by CONICYT-PCHA/Mag\'{\i}sterNacional/2016-22160383 and partially by N\'ucleo Milenio de F\'{\i}sica Matem\'atica RC120002. M. M\u antoiu is supported by the Fondecyt Project 1160359. The authors are grateful to two anonymous referees for useful comments that contributed to improve the final form of the manuscript.

\bigskip

Marius M\u antoiu, Maximiliano Sandoval:
\endgraf
Departamento de Matem\'aticas, Universidad de Chile
\endgraf
Casilla 653, Las Palmeras 3425, Ñuñoa, Santiago, Chile
\endgraf
{\it E-mail addresses:} {\rm mantoiu@uchile.cl} {\rm msandova@protonmail.com}

\end{document}